\newcommand{\R}{ {\mathbb R} }
\newcommand{\Z}{ {\mathbb Z} }
\newcommand{\F}{ {\mathbb F} }
\newcommand{\va}{ \mathbf{a} }
\newcommand{\vu}{ \mathbf{u} }
\newcommand{\vv}{ \mathbf{v} }
\newcommand{\vw}{ \mathbf{w} }
\newcommand{\vx}{ \mathbf{x} }
\newcommand{\vy}{ \mathbf{y} }
\newcommand{\affil}[1]{\textit{#1}}
\newcommand{\email}[1]{\texttt{#1}} 
\newcommand{\currentrestr}{%
  \,\raisebox{-.127ex}{\reflectbox{\rotatebox[origin=br]{-90}{$\lnot$}}}\,%
}
\DeclareMathOperator{\lip}{Lip}
\newcommand{\boundary}{\partial}
\newenvironment{subproof}{\noindent{\it Proof of claim.}}{}
\theoremstyle{plain}
\newtheorem{theorem}{Theorem}[section]
\newtheorem{corollary}[theorem]{Corollary}
\newtheorem{conjecture}[theorem]{Conjecture}
\newtheorem{lemma}[theorem]{Lemma}
\newtheorem{subclaim}{Claim}[theorem]
\theoremstyle{definition}
\theoremstyle{remark}
\newcommand{\spt}{\operatorname{spt}}
\newcommand{\perimeter}{\operatorname{perimeter}}
\newcommand{\diam}{\operatorname{diameter}}
\newcommand{\inrad}{\operatorname{inradius}}
\newcommand{\mass}{\operatorname{M}}
\newcommand{\skel}[1]{\operatorname{skel}_{#1}}
\newcommand{\current}[1]{\mathcal{D}_{#1}}
\newcommand{\form}[1]{\mathcal{D}^{#1}}
\title{{Flat Norm Decomposition of Integral Currents}}
\author{Sharif~Ibrahim\thanks{\affil{Department of Mathematics, Washington State University, Pullman, WA 99164-3113},\newline
\email{\hspace*{0.15in}\{math.msfn,bkrishna,vixie\}@\{sharifibrahim.com,math.wsu.edu,speakeasy.net\}}}\,\,\footnote{Corresponding author}\hspace*{0.3in}
  Bala~Krishnamoorthy\footnotemark[1]\hspace*{0.3in}
  Kevin R.~Vixie\footnotemark[1]
}
\begin{document}

\maketitle

\definecolor{linkcol}{rgb}{0,0,0.5}
\begin{abstract}
{\em Currents} represent generalized surfaces studied in geometric measure theory. 
They range from relatively tame integral currents representing oriented compact manifolds with boundary and integer multiplicities, to arbitrary elements of the dual space of differential forms.
The {\em flat norm} provides a natural distance in the space of currents, and works by decomposing a $d$-dimensional current into $d$- and (the boundary of) $(d+1)$-dimensional pieces in an optimal way.

Given an integral current, can we expect its flat norm decomposition to be integral as well?
This is not known in general, except in the case of $d$-currents that are boundaries of $(d+1)$-currents in $\R^{d+1}$ (following results from a corresponding problem on the $L^1$ total variation ($L^1$TV) of functionals).
On the other hand, for a discretized flat norm on a finite simplicial complex, the analogous statement holds even when the inputs are not boundaries.
This simplicial version relies on the total unimodularity of the boundary matrix of the simplicial complex -- a result distinct from the $L^1$TV approach.

We develop an analysis framework that extends the result in the simplicial setting to one for $d$-currents in $\R^{d+1}$, provided a suitable triangulation result holds.
In $\R^2$, we use a triangulation result of Shewchuk (bounding both the size and location of small angles), and apply the framework to show that the discrete result implies the continuous result for $1$-currents in $\R^2$.
\end{abstract}

\section{Introduction}
In geometric measure theory, currents represent a generalization of oriented surfaces with multiplicities.
Currents were developed in the context of Plateau's problem, and have found application in isoperimetric problems and soap bubble conjectures\cite{Morgan2008}.

Given a $d$-dimensional current $T$, we can consider decompositions $\,T = X + \boundary S\,$ where $X$ is a $d$-dimensional current and $S$ is a $(d+1)$-dimensional current.
Over all such decompositions, the minimum total mass (volume) of the two pieces (i.e., $\mass(X) + \mass(S)$) is the flat norm $\F(T)$.
More recently, the $L^1$TV functional (introduced in the form most relevant to us by Chan and Esedo\=glu\cite{ChEs2005}) was shown to be related to the flat norm\cite{MoVi2007}.
This connection suggested the flat norm with scale (yielding the objective $\mass(X) + \lambda \mass(S)$ for any fixed scale $\lambda \geq 0$), and a geometric interpretation for the optimal decompositions: varying $\lambda$ controls the scale of features isolated in the decomposition.

One natural question: must currents in a particular regularity class have an optimal flat norm decomposition in the same class\cite{Federer1969,Wh1999}?
We consider the class of integral currents.
The $L^1$TV connection shows this result is true for boundaries of codimension $1$ (i.e., $d$-currents that are boundaries in $\R^{d+1}$), since the $L^1$TV functional applied to binary (or step function) input is known to have binary (step function) minimizers\cite{ChEs2005}.
We previously studied\cite{IbKrVi2013} a discrete version of the flat norm defined on a simplicial complex $K$, where finding the simplicial flat norm of a $d$-current represented by a $d$-chain of $K$ amounts to solving an integer linear optimization problem. 
The $(d+1)$-boundary matrix of $K$ embedded in $\R^{d+1}$ is guaranteed to be totally unimodular\cite[Theorem 5.7]{DeHiKr2011}. 
This property implies the integrality of the simplicial flat norm decomposition of currents that are not necessarily boundaries.
Notice that the result in the setting of simplicial complexes is distinct from the $L^1$TV approach.

Natural applications of the flat norm often involve integral currents.
For example, consider the space of handwritten signatures (or, similarly, topographical maps) along with the flat norm.
Any two signatures can be naturally represented as integral currents and the flat norm can compute a distance between them.
In addition to this distance, the flat norm also provides an optimal decomposition which determines the optimal way to turn one signature into the other.
These decompositons are most useful when they can be interpreted in the same way as the inputs (i.e., they are integral currents as well).

\noindent {\bfseries Our Contributions:}
In the present work, we develop an analysis framework to bridge the gap between the continuous and the discrete cases. 
Assuming a suitable triangulation result, 
our framework allows us to drop the requirement that integral $d$-currents in $\R^{d+1}$ be boundaries in order to have a guaranteed integral optimal decomposition.
We prove this necessary triangulation result in $\R^2$ using Shewchuk's Terminator algorithm\cite{Sh2002} for subdividing planar straight line graphs.
This algorithm simultaneously bounds the smallest angles in the complex and tells us where they can occur, allowing us to tailor a simplicial complex to a given set of input currents.
We then obtain a simplicial deformation theorem with constant bounds for these currents and the simplicial complex, ensuring the sequence of approximating discretized problems are well-behaved, and solve the continuous problem in the limit.
Assuming a suitable triangulation result for higher dimensions (see \cref{con:boundreg}), we show that codimension-$1$ integral currents have an integral optimal flat norm decomposition (\cref{thm:icmainresult}).

\noindent {\bfseries Related Work:}
Several related questions were considered by Almgren--- see his {\em Unfinished Work} as reported by White\cite{Wh1998}.
In particular, Almgren considered the question:  if $2T_i$ is a sequence of integral flat chains that converge in the integral flat topology, must the sequence $T_i$ also converge?
As White reported\cite{Wh1998}, Almgren ``... seemed to use every weapon in his arsenal, including his enormous $(m-2)$ regularity paper''.
Yet, little progress has since been reported on this problem.

For the related problem of least area with a given boundary (which can be considered as the flat norm problem with $X$ constrained to be empty), counterexamples of Young\cite{Yo1963}, White\cite{Wh1984}, and Morgan\cite{Mo1984} provide instances in which the minimizer is not integral for a given integral boundary.
R.~Young provides an excellent illustration of how these examples work\cite[Fig.~1]{Yo2013} and, by bounding the nonorientability of cycles, limits how much ``cheaper by the dozen'' such integral minimizers can be.

These counterexamples have codimension $3$ (the inputs are $1$-dimensional curves in $\R^4$) and imply codimension 3 counterexamples for the flat norm integral decomposition question as well (see \cref{sec:leastarea}).

\subsection{Definitions}

Let $\form{d}$ be the set of $C^\infty$ differentiable $d$-forms with compact support.
The set of $d$-currents (denoted $\current{d}$) is the dual space of $\form{d}$ with the weak topology.

Currents have mass and boundary that correspond (for rectifiable currents, at least) to one's intuition for what these should mean for $d$-dimensional surfaces in $\R^n$ with care taken to respect orientation and multiplicities.
The mass of a $d$-current $T$ is formally given by $\sup_{\phi \in \form{d}} \{T(\phi) \mid \norm{\phi} \leq 1\}$ and the boundary is defined when $d \geq 1$ by $\boundary T(\psi) = T(\dif \psi)$ for all $\psi \in \form{d-1}$.
When $T$ is a 0-current, we let $\boundary T = 0$ as a 0-current.
The boundary operator on currents is linear and nilpotent (i.e., $\boundary \boundary T = 0$ for any current $T$), inheriting these properties from exterior differentiation of forms (which are linear and satisfy $\dif \dif \phi = 0$).

{\it Normal $d$-currents} have compact support and finite mass and boundary mass (i.e., $\mass(T) + \mass(\boundary T) < \infty$).
The set $\mathcal{R}_d$ denotes the {\it rectifiable $d$-currents} and contains all currents with compact support that represent oriented rectifiable sets with integer multiplicities and finite mass.
That is, sets which are almost everywhere the countable union of images of Lipschitz maps from $\R^d$ to $\R^n$.
Lastly, the set $\mathcal{I}_d$ represents {\it integral $d$-currents} and contains all currents that are both rectifiable and normal (formally, it is the set of rectifiable currents with rectifiable boundary, but this definition is equivalent by the closure theorem\cite[4.2.16]{Federer1969}).

The flat norm of a current $T$ is given by 
\[
\F(T) = \min \{\mass(X) + \mass(S) \mid T = X + \boundary S, X \in \mathcal{E}_d, S \in \mathcal{E}_{d+1}\}
\]
where $\mathcal{E}_d$ is the set of $d$-dimensional currents with compact support (see \cref{fig:icflatnorm}).
The Hahn-Banach theorem guarantees this minimum is attained\cite[p. 367]{Federer1969} so it makes sense to talk about particular $X$ and $S$ as a flat norm decomposition of $T$ (note, however, that the decomposition need not be unique).

\begin{figure}[hb!]  
\centering
\includegraphics[scale=0.7, trim=1.25in 8.3in 2.8in 1in, clip]{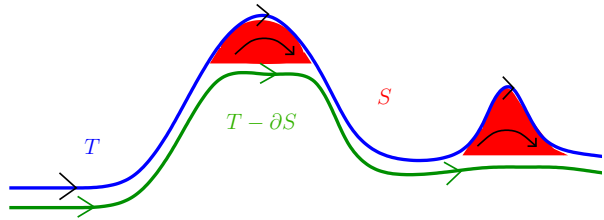}
\caption[Flat norm decomposition]{The flat norm decomposes the 1D current $T$ into (the boundary of) a 2D piece $S$ and the 1D piece $X = T - \boundary S$. The current $X$ is shown slightly separated from the input current $T$ for clearer visualization.}
\label{fig:icflatnorm}
\end{figure}

For two currents, the flat distance between them is given by $\F(T, P) = \F(T-P)$.
This definition is useful because it is robust to small additions and perturbances (e.g., noise) and reflects when currents are intuitively close.
For example, given a current $T$ representing a unit circle in $\R^2$ and an inscribed $n$-gon $T_n$ (both oriented clockwise, see \cref{subfig:icflatdistance1}), one would like $T_n$ to converge to $T$ in some sense as $n \to \infty$ which the flat norm accomplishes (contrast with the mass norm $\mass(T_n - T)\rightarrow 4\pi$).

\begin{figure}[ht!]
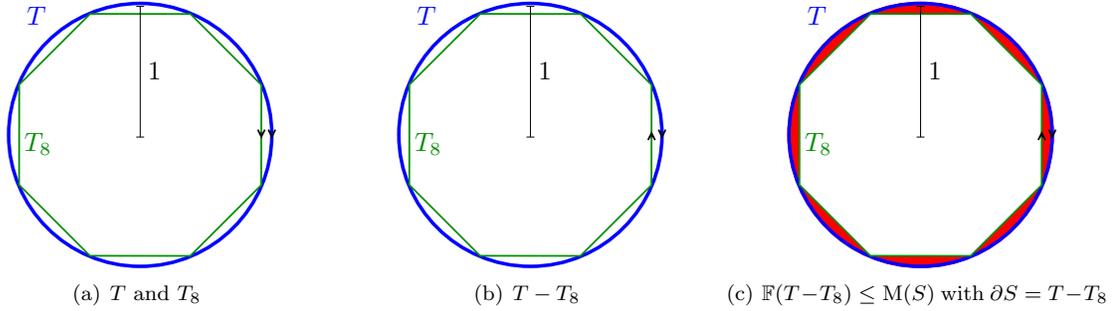

\begin{center}
\subfigure[$T$ and $T_8$]{\label{subfig:icflatdistance1} \makebox[\linewidth/10*3][c]{\input{polygonexampleb1.pspdftex}}}
\subfigure[$T - T_8$]{\label{subfig:icflatdistance2} \makebox[\linewidth/10*3][c]{\input{polygonexampleb2.pspdftex}}}
\subfigure[$\F(T-T_8) \leq \mass(S)$ with $\boundary S = T - T_8$]{\label{subfig:icflatdistance3} \makebox[\linewidth/10*3][c]{\input{polygonexampleb3.pspdftex}}}
\end{center}
\caption[Flat distance example]{The flat norm indicates the unit circle $T$ and inscribed $n$-gon $T_n$ are close because the region they bound has small area.}
\end{figure}

The flat norm can be discretized in a natural sense.
Given a simplicial $(d+1)$-complex $K$ and a $d$-chain $T$ on $K$, the simplicial flat norm\cite{IbKrVi2013} of $T$ on $K$ is denoted by $\F_K(T)$ and is defined analogously except that $X$ and $S$ are restricted to be chains on $K$.

\subsection{Overview}
We try to broadly follow the standard notion which expresses the continuous problem as a limit of discrete problems for which the result holds.
But the challenge is in working out the details.
\cref{thm:simpint} tells us that the {\em simplicial} flat norm of an integral chain in codimension $1$ has an optimal integral current decomposition; by the compactness theorem from geometric measure theory, the limit of these decompositions is also integral.

In order to show that an integral current $T$ has integral flat norm decomposition, we therefore find suitable simplicial approximations to $T$ and take the limit of their simplicial flat norm decompositions to obtain an integral decomposition for $T$.

We must also show that this decomposition achieves the flat norm value for $T$ (that is, express $T$ using integral currents in such a way that it remains an optimal flat norm decomposition).
Using the compactness theorem, this result is immediate if our simplicial approximations to $T$ have simplicial flat norm values that converge to the flat norm of $T$, but this convergence in flat norm values may not hold (see \cref{fig:flatnormconv}).
We wish to show 
\begin{align}
\label{eq:fteqgoal}
\lim_{\delta \downarrow 0}\F_{K_\delta}(P_\delta) &= \F(T)
\end{align}
 where $P_\delta$ is a simplicial approximation to $T$ on some complex $K_\delta$ with $\F(P_\delta - T) < \delta$.

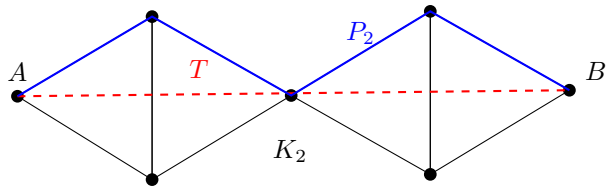
\begin{figure}[hb]
\medskip
\centering
\begin{tikzpicture}[scale=1]
    \tikzstyle{point}=[circle,thick,draw=black,fill=black,inner sep=0pt,minimum width=4pt,minimum height=4pt]
    \tikzstyle{current}=[thick]

    
    \node (p)[point,label={[label distance=0cm]90:$A$}] at (0,0) {};
    \node (K2)[below right = 1cm and 3.5cm,label={[label distance=0cm]90:$K_2$}] {};

    \node[above right = 1cm and 1.73cm] (t1)[point] {};
    \node[below = 2cm of t1] (t2)[point] {};
    \node[above right = 1cm and 1.73cm of t2] (t3)[point] {};
    \node[above right = 1cm and 1.73cm of t3] (t4)[point] {};
    \node[below = 2cm of t4] (t5)[point] {};
    \node[above right = 1cm and 1.73cm of t5] (q)[point,label={[label distance=0cm]5:$B$}] {};

     \draw (p.center) -- (t1.center) -- (t2.center) -- cycle;
     \draw (t1.center) -- (t2.center) -- (t3.center) -- cycle;
     \draw (t4.center) -- (t5.center) -- (t3.center) -- cycle;
     \draw (t4.center) -- (t5.center) -- (q.center) -- cycle;
     \draw[current,color=red,dashed] (p.center) -- node[label={[label distance=1cm]176:$T$}] {} (q.center);
     \draw[current,color=blue] (p.center) -- (t1.center) -- (t3.center) -- node[label={[label distance=0cm]88:$P_2$}] {} (t4.center) -- (q.center);

\end{tikzpicture}
\caption[Simplicial flat norm need not converge to continuous flat norm]{A sequence of simplicial chains that converges in the flat norm (i.e., $P_n \rightarrow T$) need not have convergent simplicial flat norm values (i.e., $\F_{K_n}(P_n) \rightarrow \F(T)$ need not hold).
The current $T$ is the line segment from $A$ to $B$ (shown dashed), the complex $K_n$ is the arrangement of $2n$ equilateral triangles of appropriate size stretching from $A$ to $B$ and $P_n$ is the top chain from $A$ to $B$ on $K_n$.
Clearly, $\F(T-P_n) \rightarrow 0$ but $\F_{K_n}(P_n) = \frac{2}{\sqrt{3}}\F(T) \not\rightarrow \F(T)$.}
\label{fig:flatnormconv}
\end{figure}

This goal prevents us from simply using the simplicial deformation theorem to obtain $P_\delta$, since we may end up with the situation illustrated in \cref{fig:flatnormconv}.
Instead, we use a polyhedral approximation to $T$ which guarantees that the mass increases by at most $\delta$ (i.e., $\mass(P_\delta) < \mass(T) + \delta$, rather than the simplicial deformation theorem bound $\mass(P_\delta) < C_1\mass(T) + C_2\mass(\boundary T)$ with the constants bounded away from 1).

The next step is to take an optimal (possibly nonintegral) decomposition of $T$ and approximate it with polyhedral chains (see~\cref{fig:approxoverview}).
That is, approximate the decomposition $T = X + \boundary S$ with polyhedral $X_\delta$ and $S_\delta$.
If these approximations naturally form a decomposition (not necessarily optimal) of $P_\delta$ (i.e., $P_\delta = X_\delta + \boundary S_\delta$), then we would have $\F_{K_\delta}(P_\delta) \leq \mass(X_\delta) + \mass(S_\delta) < \F(T) + 2\delta$ for any complex $K_\delta$ containing $P_\delta$, $X_\delta$, and $S_\delta$.
This result of course implies \cref{eq:fteqgoal}.

\begin{figure}[h!]
\medskip
\centering
\begin{tikzpicture}[scale=1,every node/.style={scale=1}]
  \matrix (m) [matrix of math nodes,row sep=3em,column sep=7em,minimum width=2em]
  {
    T & P_\delta \\
    X+\boundary S & X_\delta + \boundary S_\delta\\
  };
  \path[-stealth]
   (m-1-1) edge[-,draw opacity=0] node[rotate=90] {$=$} node[left=1em,text width=6.5em,align=center] {optimal\\flat norm\\decomposition}(m-2-1)
            edge node[above=0.5em,text width=7em,align=center] {Polyhedral\\approximation}(m-1-2)
   (m-2-1.east|-m-2-2) edge node [above=1em] {$X \Rightarrow X_\delta$}
            node [above] {$S \Rightarrow S_\delta$} node [below,text width=7em,align=center] {Polyhedral\\approximation} (m-2-2);
\end{tikzpicture}
\label{fig:approxoverview}
\caption[Overview of useful approximations and decompositions]{Various approximations and decompositions used in our results.}
\end{figure}
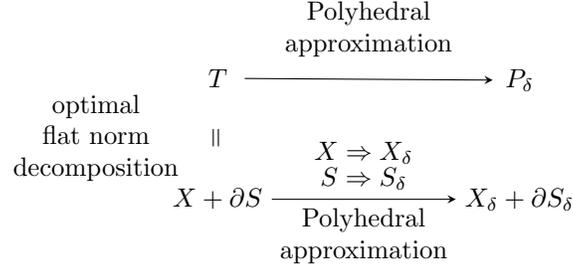

However (as shown in \cref{fig:polyhedralization}), we need not have $P_\delta = X_\delta + \boundary S_\delta$.
Since we obtained these quantities by polyhedral approximation, it turns out that the extent to which this equation is violated is small (in the continuous flat norm).
That is, we have 
\begin{align}
\label{eq:icgoalrestate}
P_\delta = X_\delta + \boundary S_\delta + (P_\delta - T) + (\boundary S - \boundary S_\delta) + (X - X_\delta).
\end{align}

While \cref{eq:icgoalrestate} can be viewed as a decomposition of $P_\delta$, the added error terms mean it may not be a chain on a simplicial complex.
Hence it cannot be used directly to bound the simplicial flat norm of $P_\delta$.

If we use the simplicial deformation theorem to push the error terms to some complex $K_\delta$ while preserving a pushed version of \cref{eq:icgoalrestate}, we can obtain a candidate simplicial decomposition of $P_\delta$.
In order to use this candidate to bound $\F_{K_\delta}$, we must know that this pushing step did not make the small error terms too large.
Unfortunately, the bounds on mass specified by the simplicial deformation theorem rely on simplicial regularity, so a sufficiently skinny simplex could mean the error terms become large.
If the simplicial irregularity in $K_\delta$ gets worse as $\delta \downarrow 0$, we will not be able to show \cref{eq:fteqgoal}.

Since we know exactly which currents we wish to push, our solution is to pick $K_\delta$ with these in mind: make sure the complex is as regular as possible overall (independently of $\delta$) with any irregularities (which may be required to embed $P_\delta$, $X_\delta$, and $S_\delta$) isolated in subcomplexes of small measure.
By making the irregular portions small enough (so they contain a negligible portion of the error terms, even considering the possible magnification from pushing), we establish a deformation theorem variant (\cref{thm:2dboundedsdt}) with constant mass expansion bounds, assuming a triangulation result that lets us isolate the irregularities as described (Shewchuk's Terminator algorithm\cite{Sh2002} provides this result in $\R^2$).
The pushed version of \cref{eq:icgoalrestate} allows us to prove $\F_{K_\delta}(P_\delta) \leq \F(T) + O(\delta)$, from which \cref{eq:fteqgoal} and \cref{thm:icmainresult} follow.

\begin{figure}[hb]
\medskip
\centering
\begin{tikzpicture}[scale=0.5]
	\tikzstyle{point}=[circle,thick,draw=black,fill=black,inner sep=0pt,minimum width=4pt,minimum height=4pt]
	\tikzstyle{current}=[thick]

	\node (t1)[point,label={a}] at (0,0) {};
	\node[right = 3cm] (t2)[point] {};
	\node[above right = 2cm and 1.5cm of t2] (t3)[point] {};
	\node[below right = 1.5cm and 1.5cm of t3] (t4)[point] {};
	\node[above right = 1cm and 1cm of t4] (t5)[point] {};
	\node[below right = 1cm and 1cm of t5] (t6)[point] {};

	\node[above right=0.1cm and 3.5cm of t1] (u2)[point] {};
	\node[above right=0.7cm and 0.7cm of u2] (u3a)[point] {};
	\node[right=0.6cm of u3a] (u3)[point] {};
	\node[below right=0.2cm and 0.5cm of u3] (u4)[point] {};
	\node[right=2.7cm of u4] (u5)[point] {};

	\node[below left=1cm and 1cm of t3] (s2)[point] {};
	\node[right=2.2cm of s2] (s3)[point] {};
	\node[below left=0.7cm and 0.65cm of t5] (s4)[point] {};
	\node[right=1.3cm of s4][point] (s5) {};

    \fill[opacity=0.5,color=red]  (t3) -- (s2) -- (s3) -- cycle;
	\draw[current] (t1) -- node[label={270:$P_\delta$}] {} (t2) -- (t3) -- (t4) -- (t5) -- (t6);
	\draw[current,color=blue,dashed] (t1) -- node[label={$X_\delta$}] {} (u2) -- (u3a) -- (u3) -- (u4) -- (u5);
    \draw[thick,color=red,fill=red,fill opacity=0.4,text opacity=1]  (t3.center) -- node[label={$S_\delta$}] {} (s2.center) -- (s3.center) -- cycle;
    \draw[thick,color=red,fill=red,fill opacity=0.4,text opacity=1]  (t5.center) -- (s4.center) -- (s5.center) -- cycle;
\end{tikzpicture}
\caption[Polyhedral approximation of \cref{fig:icflatnorm}]{A possible polyhedral approximation of the decomposition shown in \cref{fig:icflatnorm}.
Note that $P_\delta \neq X_\delta + \boundary S_\delta$.}
\label{fig:polyhedralization}
\end{figure}
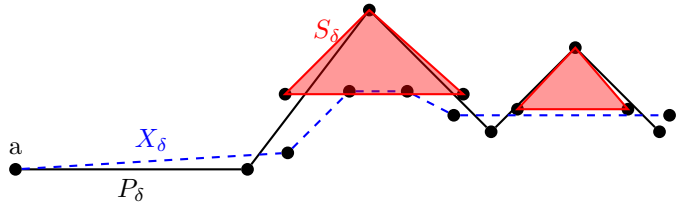

\section{Preliminaries}
Our goal is to investigate conditions under which the flat norm decomposition of an integral current can be taken to be integral as well.
The corresponding statement for normal currents is true and useful in our development.

\begin{lemma}
\label{lem:normaldecomp}
If $T$ is a normal $m$-current and $X$ and $S$ are $m$- and $(m+1)$-currents such that $T = X + \boundary S$ and $\F(T) = \mass(X) + \mass(S)$ (i.e., $T = X + \boundary S$ is a flat norm decomposition of $T$), then $X$ and $S$ are normal currents.
\end{lemma}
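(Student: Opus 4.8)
The plan is to verify directly the three defining properties of a normal current---compact support, finite mass, and finite boundary mass---for each of $X$ and $S$, relying only on the decomposition identity $T = X + \boundary S$, the nilpotency $\boundary\boundary T = 0$, and the normality of $T$. No deep machinery is needed: the content is entirely bookkeeping with the mass seminorm and its subadditivity $\mass(A+B) \le \mass(A) + \mass(B)$ (immediate from the definition of mass as a supremum of linear evaluations over $\norm{\phi}\le 1$).

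First I would dispose of compact support and finite mass. Compact support comes for free: by the definition of the flat norm the decomposition is chosen with $X \in \mathcal{E}_m$ and $S \in \mathcal{E}_{m+1}$, i.e.\ among compactly supported currents, so both pieces are compactly supported by hypothesis. For finite mass, observe that $\F(T) \le \mass(T) < \infty$ (take the trivial competitor $X = T$, $S = 0$, which is admissible since $T$ is compactly supported), so the optimal value $\mass(X) + \mass(S) = \F(T)$ is finite; as masses are nonnegative, this forces $\mass(X) < \infty$ and $\mass(S) < \infty$ individually. It then remains only to bound the two boundary masses. For $S$, rearrange to $\boundary S = T - X$ and apply subadditivity, $\mass(\boundary S) \le \mass(T) + \mass(X) < \infty$, using normality of $T$ together with the finiteness of $\mass(X)$ just established. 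For $X$, write $X = T - \boundary S$ and take boundaries; nilpotency annihilates the second term, giving $\boundary X = \boundary T - \boundary\boundary S = \boundary T$, hence $\mass(\boundary X) = \mass(\boundary T) < \infty$, again by normality of $T$. Having checked compact support, finite mass, and finite boundary mass for both currents, we conclude that $X$ and $S$ are normal.

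I do not expect a genuine obstacle here; this is a foundational consistency check rather than a substantive result. The only point deserving attention is recognizing that $\boundary X$ coincides exactly with $\boundary T$, so that control of $\boundary X$ is inherited from the normality of $T$ with nothing extra to prove. It is also worth noting that optimality of the decomposition is used only weakly---solely to certify that $\mass(X) + \mass(S)$ is finite via its equality with $\F(T)$---so the same argument in fact applies to any decomposition of $T$ into compactly supported pieces of finite total mass.
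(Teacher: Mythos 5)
Your proof is correct and follows essentially the same route as the paper: finiteness of $\mass(X)$ and $\mass(S)$ from $\F(T)\leq\mass(T)<\infty$, then $\boundary X=\boundary T$ via nilpotency of $\boundary$, the bound $\mass(\boundary S)\leq\mass(T)+\mass(X)$ by subadditivity, and compact support directly from the definition of the flat norm. Your closing remark that optimality is used only to certify finiteness of the total mass is a nice observation, but there is nothing here that diverges from the paper's argument.
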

\begin{proof}
By the definition of normal current, we have $\mass(T) + \mass(\boundary T) < \infty$.
Thus
\[
\mass(X) + \mass(S) = \F(T) \leq \mass(T) < \infty
\]
so $\mass(X) < \infty$ and $\mass(S) < \infty$.
Since $T = X + \boundary S$, we obtain
\[
\mass(\boundary X) = \mass(\boundary\del{X + \boundary S}) = \mass(\boundary T) < \infty.
\]
Lastly,
\[
\mass(\boundary S) \leq \mass(\boundary S - T) + \mass(T) = \mass(-X) + \mass(T)  < \infty.
\]
The currents $X$ and $S$ have compact support by the definition of the flat norm.
Thus $X$ and $S$ are normal by definition.
\end{proof}

Convergence in the flat norm is linear and commutes with the boundary operator as the following easy lemma shows.

\begin{lemma}
\label{lem:convprops}
Suppose that $T_n$ and $U_n$ are $m$-currents for $n = 1, 2, \dots$, and $T_n \rightarrow T$ and $U_n \rightarrow U$ in flat norm (i.e., $\F(T_n - T) \rightarrow 0$, $\F(U_n - U) \rightarrow 0$) for some $m$-currents $T$ and $U$.
Then the following properties hold:
\begin{enumerate*}
\item[(a)] $\alpha T_n + \beta U_n \rightarrow \alpha T + \beta U$ for any constants $\alpha, \beta \in \R$; and
\item[(b)] $\boundary T_n \rightarrow \boundary T$.
\end{enumerate*}
\end{lemma}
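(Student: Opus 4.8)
The plan is to reduce both parts to three elementary properties of the flat norm that follow immediately from its definition as a minimum of $\mass(X) + \mass(S)$ over decompositions $T = X + \boundary S$, together with the facts that mass is subadditive and absolutely homogeneous and that $\boundary$ is linear and nilpotent. Concretely, I would first record: (i) subadditivity, $\F(P + Q) \le \F(P) + \F(Q)$; (ii) homogeneity, $\F(\alpha P) = |\alpha|\,\F(P)$; and (iii) the boundary-contraction inequality, $\F(\boundary R) \le \F(R)$. Each is a one-line consequence of the definition: for (i), adding optimal decompositions of $P$ and $Q$ yields a (not necessarily optimal) decomposition of $P+Q$ whose cost is at most $\F(P)+\F(Q)$; for (ii), scaling an optimal decomposition of $P$ by $\alpha$ gives $\F(\alpha P) \le |\alpha|\,\F(P)$ (using $\boundary(\alpha S) = \alpha\boundary S$ and homogeneity of mass), and applying this inequality with $\alpha$ replaced by $1/\alpha$ to the current $\alpha P$ gives the reverse inequality when $\alpha \ne 0$, the case $\alpha = 0$ being trivial. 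Each constructed decomposition stays in the required classes $\mathcal{E}_d$, $\mathcal{E}_{d+1}$, since sums, scalar multiples, and boundaries of compactly supported currents remain compactly supported.

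For part (a), I would simply write $\alpha T_n + \beta U_n - (\alpha T + \beta U) = \alpha(T_n - T) + \beta(U_n - U)$ and apply (i) and (ii) to obtain
\[
\F\bigl(\alpha T_n + \beta U_n - (\alpha T + \beta U)\bigr) \le |\alpha|\,\F(T_n - T) + |\beta|\,\F(U_n - U).
\]
Both terms on the right tend to $0$ by hypothesis, so the left side does as well, which is exactly $\alpha T_n + \beta U_n \to \alpha T + \beta U$ in flat norm.

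For part (b), the key is (iii). To establish it, take an optimal decomposition $R = X + \boundary S$, so that $\boundary R = \boundary X + \boundary\boundary S = \boundary X$ by nilpotency. Then $\boundary R = 0 + \boundary X$ is a valid flat-norm decomposition of the $(m-1)$-current $\boundary R$, with the $(m-1)$-dimensional piece equal to $0$ and the $m$-current whose boundary is taken equal to $X$; hence $\F(\boundary R) \le \mass(0) + \mass(X) \le \mass(X) + \mass(S) = \F(R)$. Applying this with $R = T_n - T$ gives $\F(\boundary T_n - \boundary T) = \F\bigl(\boundary(T_n - T)\bigr) \le \F(T_n - T) \to 0$, which is the claim.

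I expect no genuine obstacle here; as the statement already signals, this is an easy lemma, and the only points requiring care are purely bookkeeping. Specifically, I must verify that mass behaves like a norm so that (i) and (ii) go through, and I must track degrees in (iii) so that $\boundary R = 0 + \boundary X$ is read as a legitimate decomposition of an $(m-1)$-current (the $m$-current $X$ furnishing the boundary piece) rather than being confused with the decomposition of $R$ itself. Neither point presents any real difficulty.
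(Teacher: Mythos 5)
Your proof is correct and follows essentially the same route as the paper: part (a) is the same norm-property argument, and your boundary-contraction inequality $\F(\boundary R) \le \F(R)$ is exactly the paper's inline chain $\F(\boundary T_n - \boundary T) = \F(\boundary(X_n + \boundary S_n)) = \F(\boundary X_n) \le \mass(X_n) \le \F(T_n - T)$, just factored out as a standalone lemma. The only difference is presentational, with your version making the degree bookkeeping and the use of nilpotency of $\boundary$ slightly more explicit.
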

\begin{proof}
We apply properties of norms to obtain
\begin{align*}
\F((\alpha T_n + \beta U_n) - (\alpha T + \beta U)) &\leq \F(\alpha T_n - \alpha T) + \F(\beta U_n - \beta U)\\
&= \abs{\alpha}\F(T_n - T) + \abs{\beta}\F(U_n - U).
\end{align*}
Letting $n \rightarrow \infty$ yields the linearity result.
Now let $X_n$ and $S_n$ be $m$- and $(m+1)$-currents such that $X_n + \boundary S_n$ is a flat norm decomposition of $T_n - T$ for $n = 1, 2, \dots$, observing that
\begin{align*}
\F(\boundary T_n - \boundary T) &= \F(\boundary (X_n + \boundary S_n))
= \F(\boundary X_n)
\leq \mass(X_n)
\leq \F(T_n - T).
\end{align*}
The boundary result follows in the limit.
\end{proof}

In the case of the simplicial flat norm, an input integral chain is guaranteed an integral chain decomposition whenever the simplicial complex is totally unimodular\cite{IbKrVi2013}.
This occurs when the complex is free of relative torsion, which is the case for any $(d+1)$-complex in $\R^{d+1}$ or when triangulating a compact, orientable $(d+1)$-dimensional manifold\cite{DeHiKr2011}.

\begin{theorem}[Simplicial flat norm integral decomposition\cite{IbKrVi2013}]
\label{thm:simpint}
If $K$ is a simplicial $(d+1)$-complex embedded in $\R^{d+1}$, then for any integral $d$-chain $P$ on $K$, the optimal simplicial flat norm value for $P$ is attained by an integral decomposition.
\end{theorem}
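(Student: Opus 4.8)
The plan is to reduce the simplicial flat norm computation to an integer linear program and then exploit total unimodularity of the boundary matrix to guarantee an integral optimum. First I would set up notation: let $\partial_{d+1}$ be the $(d+1)$-boundary matrix of $K$, whose columns are indexed by the $(d+1)$-simplices and rows by the $d$-simplices, with entries in $\{-1,0,1\}$ recording incidence with orientation. An integral $d$-chain $P$ is a vector $\vp$ with integer entries, and we seek to write $P = X + \partial S$ where $X$ is a $d$-chain (vector $\vx$) and $S$ is a $(d+1)$-chain (vector $\vs$), minimizing $\mass(X) + \mass(S)$. Recording the mass of a simplex as its volume weight, the objective becomes a weighted $\ell_1$ norm $\sum_i w_i^{(d)} |x_i| + \sum_j w_j^{(d+1)} |s_j|$, subject to the linear constraint $\vx + \partial_{d+1}\vs = \vp$.

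Next I would convert this into a standard-form linear program by the usual splitting of each signed variable into a difference of two nonnegative variables (e.g. $\vs = \vs^{+} - \vs^{-}$ with $\vs^{\pm} \geq 0$, and similarly for $\vx$), so that each absolute value $|s_j|$ is replaced by $s_j^{+} + s_j^{-}$ in the objective and the constraint becomes $\vx^{+} - \vx^{-} + \partial_{d+1}(\vs^{+} - \vs^{-}) = \vp$. Since the weights $w_i, w_j$ are nonnegative, an optimal solution will never have both $s_j^{+}$ and $s_j^{-}$ positive, so this relaxation is faithful. The constraint matrix of this LP is built by stacking identity blocks (from the $\vx^{\pm}$ variables) alongside copies of $\pm\partial_{d+1}$ (from the $\vs^{\pm}$ variables).

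The crux is then to invoke integrality. I would appeal to the cited result that $\partial_{d+1}$ is totally unimodular for any simplicial $(d+1)$-complex embedded in $\R^{d+1}$ (Theorem 5.7 of \cite{DeHiKr2011}). A standard fact is that appending identity columns to a totally unimodular matrix, and negating columns, preserves total unimodularity; hence the full LP constraint matrix is totally unimodular. By the Hoffman--Kruskal theorem, a linear program whose constraint matrix is totally unimodular and whose right-hand side is integral has an integral optimal vertex solution whenever an optimum exists. Since $\vp$ is integral and the LP is feasible (e.g. $\vx = \vp$, $\vs = 0$) and bounded below by $0$, an integral optimal solution $(\vx^{\pm}, \vs^{\pm})$ exists, and recombining gives integral $X$ and $S$ attaining the optimal simplicial flat norm value.

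I expect the main obstacle to be the careful bookkeeping that preserves total unimodularity through the transformation from the signed flat-norm problem to the nonnegative standard-form LP, together with confirming that the recombined integer solution genuinely corresponds to integral $d$- and $(d+1)$-chains achieving the flat norm rather than merely some feasible decomposition. In particular I would verify that the objective is linear in the split variables (so that the LP vertex structure applies) and that the optimum of the relaxed LP coincides with the true flat norm value, ruling out any gap introduced by the variable splitting. The total-unimodularity input is doing the heavy lifting here, so the remaining work is essentially verifying that the flat-norm optimization is exactly a TU-constrained integer program in disguise.
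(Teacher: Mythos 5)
Your proposal is correct and takes essentially the same route as the paper's own (cited) proof: in \cite{IbKrVi2013} the simplicial flat norm is posed as an integer program, the signed variables are split to give a nonnegative LP with constraint matrix built from $\pm\partial_{d+1}$ and identity blocks, and total unimodularity of $\partial_{d+1}$ for $(d+1)$-complexes embedded in $\R^{d+1}$ \cite{DeHiKr2011} yields integral optimal vertex solutions exactly as you argue. The present paper simply quotes that result, so your reconstruction matches the intended argument, including the faithfulness of the variable splitting and the preservation of total unimodularity under appending identity columns and negating columns.
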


We state the simplicial deformation theorem and sketch a portion of its proof.
We will later modify it to obtain a multiple current deformation theorem that preserves linearity (\cref{thm:multisdt}).

\begin{theorem}[Simplicial deformation theorem\cite{IbKrVi2013}]
\label{thm:sdt}
Suppose $K$ is a $p$-dimensional simplicial complex in $\R^q$ and $T$ is a normal $d$-current supported\footnote{Intuitively, the support of an integral $d$-current is the $d$-dimensional surface it represents.
More precisely, a current $T$ is supported in a set $C$ if $T(\phi) = 0$ for every differential form $\phi$ compactly supported in the complement of $C$.}
on the underlying space of $K$.
There exists a simplicial $d$-current $P$ supported on the $d$-skeleton of $K$ with boundary supported on the $(d-1)$-skeleton (i.e., a simplicial $d$-chain) such that $T-P=Q+\boundary R$ and there exists a constant $\upvartheta_K$ (depending only on simplicial regularity in $K$) such that the following controls on mass hold:
\begin{align}
\mass(P) &\leq (4\upvartheta_K)^{p-d} \mass(T) + \Delta (4\upvartheta_K)^{p-d+1} \mass(\boundary T),\\
\mass(\boundary P) &\leq (4\upvartheta_K)^{p-d+1}\mass(\boundary T),\\
\mass(Q) &\leq \Delta (4\upvartheta_K)^{p-d}(1+4\upvartheta_K)\mass(\boundary T),\\
\mass(R) &\leq \Delta(4\upvartheta_K)^{p-d}\mass(T), ~\mbox{ and }\\
\F(T,P) &\leq \Delta (4\upvartheta_K)^{p-d}(\mass(T)+(1+4\upvartheta_K)\mass(\boundary T)),
\end{align}
where $\Delta$ is the largest diameter of a simplex in $K$.
The regularity constant $\upvartheta_K$ is given by
\begin{equation}
\label{eq:regularity}
\upvartheta_K = \sup_{\sigma \in K}\frac{\diam(\sigma)\perimeter(\sigma)}{B_\sigma} + 2 \sup_{\sigma \in K}\frac{\diam(\sigma)}{\inrad(\sigma)}
\end{equation}
where for each $l$-simplex $\sigma$, $\perimeter(\sigma)$ is the $(l-1)$-volume of $\boundary \sigma$ and $B_\sigma$ is the $l$-volume of a ball with radius $\inrad(\sigma)/2$ in $\R^l$.
\end{theorem}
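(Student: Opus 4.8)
The plan is to follow the classical Federer--Fleming deformation theorem, adapting the pushing construction to the simplices of $K$ and tracking every constant explicitly so that the geometric distortions collect into the regularity measure $\upvartheta_K$. The guiding idea is to lower the dimension of the support of $T$ one step at a time: starting on the $p$-skeleton, I push $T$ off the interiors of the top-dimensional cells onto their boundaries, then repeat on the $(p-1)$-skeleton, and so on, until after $p-d$ steps the pushed current is supported on the $d$-skeleton and is therefore a simplicial $d$-chain $P$. Each step is built from a single-simplex radial (central) projection: inside an $l$-simplex $\sigma$ I select a center $a$ in its interior and let $\pi_a$ send $\sigma \setminus \{a\}$ onto $\boundary\sigma$ along rays from $a$. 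This map is Lipschitz away from $a$, so the pushforward $\pi_{a\#}T$ is well defined; assembling these simplexwise maps with centers chosen to agree on shared faces yields a global retraction of the $l$-skeleton onto the $(l-1)$-skeleton.

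At each step the discrepancy between the current and its pushforward is supplied by the affine homotopy formula. Writing $H$ for the affine homotopy from the identity to the retraction, one has
\[
\pi_\# T - T = \boundary\, H_\#([0,1]\times T) + H_\#([0,1]\times \boundary T),
\]
so that $T - P = Q + \boundary R$ with $R$ an image of $T$ (up to sign) and $Q$ an image of $\boundary T$. Because $T$ is normal, both $\mass(T)$ and $\mass(\boundary T)$ are finite, which is exactly what forces these homotopy images to have finite mass; it is also why the eventual bounds on $\mass(R)$ and $\mass(Q)$ scale with $\mass(T)$ and $\mass(\boundary T)$ respectively. The flat norm estimate then follows at once from $\F(T,P) = \F(Q + \boundary R) \leq \mass(Q) + \mass(R)$.

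The quantitative heart of the argument is the per-step mass estimate for the radial projection. For a fixed center $a$, the Jacobian of $\pi_a$ at a point $x \in \sigma$ grows like a power of $\diam(\sigma)/\abs{x-a}$, so $\mass(\pi_{a\#}T)$ is bounded by $\mass(T)$ times a factor that is large only when $a$ lies near the support of $T$. To locate a good center I integrate this bound over all $a$ in a ball of radius $\inrad(\sigma)/2$ about the incenter and apply Fubini: the offending singularity $\abs{x-a}^{-(l-1)}$ is integrable in dimension $l$, so the average mass expansion is finite and is controlled precisely by the ratios $\diam(\sigma)\perimeter(\sigma)/B_\sigma$ and $\diam(\sigma)/\inrad(\sigma)$ that define $\upvartheta_K$. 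Hence some admissible $a$ realizes a mass expansion of at most $4\upvartheta_K$ for that step. The same averaging simultaneously bounds the masses of $H_\#([0,1]\times T)$ and $H_\#([0,1]\times \boundary T)$, the extra factor of $\Delta$ arising from the length of the homotopy direction. Iterating over the $p-d$ dimension-lowering steps multiplies these per-step factors to produce the powers $(4\upvartheta_K)^{p-d}$ and $(4\upvartheta_K)^{p-d+1}$ in the stated inequalities, and the error currents accumulated along the way are summed via the resulting geometric series.

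The main obstacle I anticipate is the selection of centers, which must satisfy two competing demands at once. The centers have to be chosen so that the simplexwise projections are consistent on shared faces --- otherwise the pushed object is a collection of incompatible pieces rather than a genuine chain on $K$ --- and they must stay far enough from the support of $T$ to keep each step's mass expansion below $4\upvartheta_K$. Reconciling this combinatorial compatibility constraint with the measure-theoretic averaging argument, and verifying that the explicit constant $4\upvartheta_K$ really dominates the averaged Jacobian bound uniformly over every simplex shape permitted in $K$, is where the bookkeeping is most delicate; it is precisely the portion for which I would defer to the detailed construction in \cite{IbKrVi2013}.
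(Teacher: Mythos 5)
Your proposal is correct and follows essentially the same route as the paper's proof: skeleton-by-skeleton radial projection from chosen centers, the homotopy formula producing $T - P = Q + \boundary R$, and a Fubini/averaging argument over centers in a ball of radius $\inrad(\sigma)/2$ showing the mean expansion is bounded by $\upvartheta_\sigma$, so that a Markov-type selection yields a center with per-step expansion at most $4\upvartheta_K$, iterated over the $p-d$ steps to give the stated powers. The one obstacle you single out --- choosing centers to agree on shared faces --- is in fact vacuous, since each radial projection restricts to the identity on $\boundary\sigma$ and the simplexwise maps therefore glue automatically; the genuinely delicate point, which the paper resolves by noting that at most a quarter of centers are bad for $T$ and at most a quarter are bad for $\boundary T$ (leaving at least half simultaneously good), is that a \emph{single} center must control both currents at once, and this is exactly where the factor $4$ in $4\upvartheta_K$ comes from.
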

\begin{proof}[Proof highlights]
The simplicial current $P$ is obtained by pushing $T$ and its boundary to the $d$- and $(d-1)$-dimension skeletons of $K$ respectively.
This pushing is done one dimension at a time; that is, $T$ is pushed from the $p$-skeleton (i.e., the full complex $K$) to the $(p-1)$-skeleton, then to the $(p-2)$-skeleton, and so on until the $d$-skeleton.
Pushing the current from the $\ell$-skeleton to the $(\ell-1)$-skeleton is done by picking a projection center in each $\ell$-simplex $\sigma$ and projecting the current in $\sigma$ outwards to $\boundary \sigma$ via straight-line projection.

A crucial step in the proof is to find a projection center that bounds the expansion of $T$ and $\boundary T$.
In particular, this is done by proving that over all possible centers, the average expansion is bounded and then showing that individual centers exist with bounded expansion.
We call out this particular step because we modify it to obtain \cref{thm:multisdt}.

When projecting onto the skeleton of each simplex $\sigma$, we have\cite[Lemma 5.9]{IbKrVi2013}
\begin{equation}
\label{eq:avgbound}
 \frac{1}{B_\sigma}\int_{{\cal B}_\sigma} \int_{\sigma} J_d \phi(\vx,\va) \, {\rm
  d}\norm{T}(\vx) \, {\rm d} {\cal L}^{\ell}(\va) 
\, \leq \,
  \upvartheta_{\sigma} \mass(T|_{\sigma}),
\end{equation}
where ${\cal B}_\sigma$ is the set of possible centers in $\sigma$, $B_\sigma$ is its $\ell$-volume, and $\upvartheta_{\sigma}$ is a regularity constant for $\sigma$ given by
\[
\upvartheta_\sigma = \frac{\diam(\sigma)\perimeter(\sigma)}{B_\sigma} + 2\frac{\diam(\sigma)}{\inrad(\sigma)}.
\]

\cref{eq:avgbound} shows that in each projection step the mass of $T$ expands by a factor of at most $\upvartheta_{K}$ averaged over all possible choices of centers.
As the average expansion over all centers is bounded by $\upvartheta_K$, we observe that at most $\frac{1}{4}$ of the possible centers can expand the mass of $T$ by a factor of $4\upvartheta_{K}$ or more.
Similarly, at most $\frac{1}{4}$ of the centers can expand $\boundary T$ by a factor of $4\upvartheta_{K}$ or more.
Therefore, at least $\frac{1}{2}$ of the possible centers bound the expansion of both $T$ and $\boundary T$ by at most a factor of $4\upvartheta_K$.
Choosing a center from this set for each simplex yields the bounds required in the theorem.
\end{proof}

The following theorem allows normal (or integral) currents to be approximated by polyhedral chains, which are not necessarily simplicial chains contained in an a priori complex.
Note in particular that the mass bounds can be made arbitrarily tight by the choice of $\rho$, which is in contrast with the larger bounds of the deformation theorems.

\begin{theorem}[Polyhedral approximation of currents\cite{Federer1969}, 4.2.21, 4.2.24]
\label{thm:polyapprox}
If $\rho > 0$ and $T$ is a normal $m$-current in $\R^n$ supported in the interior of a compact subset $K$ of $\R^n$, then there exists a polyhedral chain $P$ with
\begin{subequations}
\begin{align}
\F(P-T) & \leq \rho,\\
\label{eq:Pbound}\mass(P) & < \mass(T) + \rho,~\mbox{ and }\\
\label{eq:BPbound}\mass(\boundary P) & < \mass(\boundary T) + \rho.
\end{align}
\end{subequations}
If $T$ is integral, then $P$ can be taken to be integral as well.
\end{theorem}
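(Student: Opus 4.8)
The plan is to combine the deformation theorem with a sharper mass estimate, since the flat-norm requirement and the mass requirement pull in opposite directions. First I would dispose of the flat-norm bound. Covering the compact set $K$ by a cubical (or simplicial) grid of edge length $\epsilon$ and applying the deformation theorem (\cref{thm:sdt}) with $p = n$ and $d = m$, I obtain a polyhedral chain $P_\epsilon$ supported on the $m$-skeleton together with a decomposition $T - P_\epsilon = Q_\epsilon + \boundary R_\epsilon$ in which $\mass(Q_\epsilon)$ and $\mass(R_\epsilon)$ are both $O(\epsilon)$, with constants depending on $\mass(T)$, $\mass(\boundary T)$, and the (fixed) grid regularity. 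Hence $\F(P_\epsilon - T) \le \mass(Q_\epsilon) + \mass(R_\epsilon) \to 0$ as $\epsilon \downarrow 0$, and choosing $\epsilon$ small enough secures $\F(P - T) \le \rho$.

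The difficulty is the mass bound $\mass(P) < \mass(T) + \rho$ (and the analogous bound for $\boundary P$): the deformation theorem only gives $\mass(P_\epsilon) \le c\,\mass(T) + O(\epsilon)$ with a constant $c > 1$ depending on the grid regularity, and lower semicontinuity of mass under flat convergence controls $\mass(T)$ from below, not $\mass(P_\epsilon)$ from above. To close this gap I would first replace $T$, up to arbitrarily small mass and flat distance, by a current that is a finite sum of oriented flat ($m$-dimensional affine) pieces carrying the multiplicities of $T$. For integral (rectifiable) $T$ this uses the existence of approximate tangent planes almost everywhere: the underlying rectifiable set is covered up to $\mass$-null error by countably many $C^1$ graphs, on each of which $T$ is approximated by an oriented simplicial piece whose mass and boundary mass differ from those of $T$ by as little as we like. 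For a general normal $T$ I would first mollify to obtain a current represented by a smooth form and then approximate that by the same kind of flat pieces.

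With $T$ reduced to flat pieces, I would project onto a grid whose position and orientation are chosen generically. The averaging step highlighted in the proof of \cref{thm:sdt} shows that, averaged over admissible projection centers (equivalently, over grid translations), the mass expansion of a flat $m$-piece pushed to the $m$-skeleton of a grid of scale $\epsilon$ tends to $1$ as $\epsilon \downarrow 0$; consequently a generic grid realizes expansion arbitrarily close to $1$ and yields $\mass(P) < \mass(T) + \rho$, and the same argument applied to $\boundary T$ gives $\mass(\boundary P) < \mass(\boundary T) + \rho$.

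Finally, integrality is preserved at every stage: the cone construction underlying the deformation sends integer-multiplicity polyhedral pieces to integer-multiplicity polyhedral chains, and the flat approximations inherit the integer multiplicities of $T$, so if $T$ is integral then $P$ may be taken integral. I expect the main obstacle to be exactly this sharp mass constant: obtaining $1$ rather than $c$ forces the genericity/averaging argument in place of the crude worst-case deformation bound, and the preliminary reduction to flat pieces (in particular the mollification needed to handle non-rectifiable normal currents) is what makes that averaging applicable.
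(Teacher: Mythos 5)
Your argument for the flat-norm bound is fine, but the key step for the mass bound fails: the claim that, averaged over grid translations or projection centers, the expansion of a flat $m$-piece pushed to the $m$-skeleton of a grid of scale $\epsilon$ tends to $1$ as $\epsilon \downarrow 0$ is false. A segment at $45^\circ$ to a square grid in $\R^2$ is pushed to a staircase of length $\sqrt{2}$ times its own for \emph{every} grid scale, every translation, and every admissible choice of centers: the pushed chain must join the same endpoints along the $1$-skeleton, so its length is at least the $\ell^1$-distance between them, which exceeds the Euclidean length by a factor bounded away from $1$ unless the piece is aligned with the grid. The paper's \cref{fig:flatnormconv} is exactly this phenomenon (the ratio $2/\sqrt{3}$ persists for all $n$), and the surrounding discussion states explicitly that this is why the deformation theorem cannot produce $P_\delta$ with $\mass(P_\delta) < \mass(T) + \delta$. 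Genericity of the grid orientation does not rescue you: for each fixed orientation the expansion of a misaligned flat piece is bounded below by an alignment-dependent constant strictly greater than $1$, and a single grid must serve all tangent directions of $T$ simultaneously. The correct mechanism is \emph{alignment}, not genericity: one first straightens a portion of $T$ carrying all but an arbitrarily small amount of mass onto an $m$-plane via a bi-Lipschitz diffeomorphism $f$ with $\lip(f), \lip(f^{-1}) \leq 1+\epsilon$, chooses the grid so that this plane lies inside the $m$-skeleton (that portion is then \emph{fixed} by the deformation, with expansion exactly $1$, just as the paper later uses the fact that chains already on the $d$-skeleton are fixed by projection), and lets the worst-case constant of \cref{thm:sdt} act only on the small remainder. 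The near-unit factor $(1+\epsilon)^m$ thus comes from the diffeomorphism, which is precisely the content of Federer's 4.2.20 quoted in the paper's proof.

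You should also know that the paper does not reprove Federer's theorem at all: its proof of \cref{thm:polyapprox} merely extracts the \emph{separated} bounds $\mass(P) < \mass(T) + \rho$ and $\mass(\boundary P) < \mass(\boundary T) + \rho$ from intermediate quantities inside Federer's proofs of 4.2.21 and 4.2.24 (via the triangle inequality on $P = P_1 + Y$ in the normal case, and via the Lipschitz bounds on $f$ in the integral case), since Federer only states the combined bound. Separately, your treatment of the normal case is underdeveloped even granting the main step: after mollification the current is diffuse (carried by $n$-dimensional Lebesgue measure with an $m$-vector density), so it has no $m$-rectifiable carrier and no approximate tangent planes, and your reduction to flat pieces does not apply to it as stated; approximating such a current by real-coefficient polyhedral chains with near-equal mass \emph{and} boundary-mass control is a genuinely separate argument, which is why Federer's 4.2.24 is not a corollary of the integral case but its own theorem.
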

\begin{proof}
This is a slight modification of Federer's theorems which do not state \cref{eq:Pbound,eq:BPbound} separately, but rather present a combined bound $\mass(P) + \mass(\boundary P) \leq \mass(T) + \mass(\boundary T) + \rho$.
We show only the derivation of the separated bounds.

In the normal current case~\cite[4.2.24]{Federer1969}, these bounds follow from Federer's proof.
In particular, we have currents $P_1$, $P_2$, and $Y$ such that $P = P_1 + Y$ and the following bounds hold:
\begin{subequations}
\label{eq:Fbounds}
\begin{align}
\label{eq:FB1}\mass(P_1) < \mass(T) + \rho/4,\\
\label{eq:FB2}\mass(P_2) < \mass(\boundary T) + \rho/4,\\
\label{eq:FB3}\mbox{and }~\mass(P_2 - \boundary P_1 - \boundary Y) + \mass(Y) < \rho/2.
\end{align}
\end{subequations}
The bounds in \cref{eq:Pbound,eq:BPbound} follow from the triangle inequality and \cref{eq:FB1,eq:FB2,eq:FB3}:
\begin{align*}
\mass(P) &\leq \mass(P_1) + \mass(Y)\\
&< \mass(T) + \rho/4 + \rho/2,\\
\mass(\boundary P) &= \mass(\boundary P_1 + \boundary Y)\\
&\leq \mass(P_2 - \boundary P_1 - \boundary Y) + \mass(P_2)\\
&< \rho/2 + \mass(\boundary T) + \rho/4.
\end{align*}

In the integral current case~\cite[4.2.21]{Federer1969}, Federer applies the approximation theorem 4.2.20 to obtain $P$ close to the pushforward of $T$ under a Lipschitz diffeomorphism $f$.
That is, for any fixed $\epsilon > 0$, there exist $P$ and $f$ such that
\begin{subequations}
\begin{align}
\label{eq:Fpushbound}\mass(P - f_\# T) + \mass(\boundary P - \boundary f_\# T) \leq \epsilon,\\
\label{eq:Fpushlip1}\lip(f) \leq 1 + \epsilon,\\
\label{eq:Fpushlip2}\mbox{ and }~\lip(f^{-1}) \leq 1 + \epsilon.
\end{align}
From \cref{eq:Fpushbound,eq:Fpushlip1,eq:Fpushlip2}, we obtain mass bounds on $P$ and $\boundary P$:
\end{subequations}
\begin{subequations}
\begin{align}
\mass(P) &\leq \mass(f_\# T) + \epsilon\\
&\leq (1+\epsilon)^m \mass(T) + \epsilon,\\
\mass(\boundary P) &\leq \mass(\boundary f_\# T) + \epsilon\\
&\leq (1+\epsilon)^{m-1} \mass(\boundary T) + \epsilon.
\end{align}
\end{subequations}
The bounds in \cref{eq:Pbound,eq:BPbound} follow by choosing $\epsilon$ small enough.
\end{proof}
\section{Results} \label{sec:results}

We modify the simplicial deformation theorem to allow multiple currents to be deformed simultaneously by projecting from the same centers.
As opposed to using \cref{thm:sdt} separately on each current (where the centers of projection need not be the same), this approach yields a linearity result: deformations of linear combinations are linear combinations of deformations.
Pushing multiple currents at the same time comes at the cost of looser bounds on the deformation (linear in the number of currents), although more careful analysis tightens these bounds by approximately a (constant) factor of $2$ (compare \cref{thm:sdt,cor:multisdtsdt} in the single current case).

\begin{theorem}
\label{thm:multisdt}
Suppose $\epsilon > 0$ and we have the hypotheses of \cref{thm:sdt} except that there are now $m$ $d$-currents $T_1, T_2, \dots, T_m$ and $n$ $(d+1)$-currents $S_1, S_2, \dots, S_n$ to push on to the complex to yield the corresponding simplicial chains $P_i$ and $O_j$.
There is a series of projection centers (as in the proof of \cref{thm:sdt} and depending on $\epsilon$, $K$, the $T_i$ and $S_j$) which can be used with every current $T_i$ and $S_j$ to obtain the bounds:

\begin{align*}
\mass(P_i) &\leq ((2m+2n +\epsilon)\upvartheta_K)^{p-d} \mass(T_i) + \Delta ((2m+2n+\epsilon)\upvartheta_K)^{p-d+1} \mass(\boundary T_i),\\
\mass(\boundary P_i) &\leq ((2m+2n+\epsilon)\upvartheta_K)^{p-d+1}\mass(\boundary T_i),\\
\F(T_i,P_i) &\leq \Delta ((2m+2n+\epsilon)\upvartheta_K)^{p-d}(\mass(T_i)+(1+(2m+2n+\epsilon)\upvartheta_K)\mass(\boundary T_i)),\\
\mass(O_j) &\leq ((2m+2n+\epsilon)\upvartheta_K)^{p-d-1} \mass(S_j) + \Delta ((2m+2n+\epsilon)\upvartheta_K)^{p-d} \mass(\boundary S_j),\\
\mass(\boundary O_j) &\leq ((2m+2n+\epsilon)\upvartheta_K)^{p-d}\mass(\boundary S_j), ~\mbox{ and } \\
\F(S_j,O_j) &\leq \Delta ((2m+2n+\epsilon)\upvartheta_K)^{p-d-1}(\mass(S_j)+(1+(2m+2n+\epsilon)\upvartheta_K)\mass(\boundary S_j)).
\end{align*}
Moreover, if we let $\pi_K$ denote the projection map that uses these centers to push $(d-1)-$, $d-$, and $(d+1)$-currents to chains on the complex, then we have that:
\begin{itemize}
\item $\pi_K$ commutes with the boundary operator (i.e., $\pi_K(\boundary A) = \boundary \pi_K(A)$ where $A$ is any $d$- or $(d+1)$-current)
\item $\pi_K$ is linear on the currents $T_i$, $\boundary T_i$, $S_j$ and $\boundary S_j$.
That is, for any scalars $a_i$ and $b_j$,
\begin{align*}
\pi_K\left(\sum_{i=1}^m a_i \boundary T_i\right) &= \sum_{i=1}^m a_i \pi_K(\boundary T_i),\\
\pi_K\left(\sum_{i=1}^m a_i T_i + \sum_{j=1}^n b_j \boundary S_j\right) &= \sum_{i=1}^m a_i \pi_K(T_i) + \sum_{j=1}^n b_j \boundary (\pi_K(S_j)), ~\mbox{ and }\\
\pi_K\left(\sum_{j=1}^n b_j S_j \right) &= \sum_{j=1}^n b_j \pi_K(S_j).
\end{align*}
\end{itemize}
\end{theorem}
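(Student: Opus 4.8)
The plan is to follow the proof of \cref{thm:sdt} essentially verbatim, altering only the step in which projection centers are selected so that a single choice of centers serves all of the currents simultaneously. Recall that the deformation proceeds one dimension at a time: at each step we push from the $\ell$-skeleton to the $(\ell-1)$-skeleton by choosing, in each $\ell$-simplex $\sigma$, a center $\va \in \mathcal{B}_\sigma$ and projecting radially to $\boundary\sigma$. The averaging bound \cref{eq:avgbound} is the crucial ingredient: for the current being pushed, the expansion averaged over $\va \in \mathcal{B}_\sigma$ is at most $\upvartheta_\sigma \leq \upvartheta_K$. The key observation is that $\upvartheta_\sigma$ depends only on the geometry of $\sigma$ and not on the current, so \cref{eq:avgbound} holds with the same $\upvartheta_K$ for each current we must control.

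First I would list the currents whose expansion must be bounded at a given step: the $d$-currents $T_1, \dots, T_m$, their boundaries $\boundary T_1, \dots, \boundary T_m$, the $(d+1)$-currents $S_1, \dots, S_n$, and their boundaries $\boundary S_1, \dots, \boundary S_n$, for a total of $2m+2n$ currents (the $S_j$ and $\boundary S_j$ are active only until the $(d+1)$-skeleton is reached, and applying the uniform factor below at every step merely weakens the bounds). Applying Markov's inequality to \cref{eq:avgbound} for each current separately, the fraction of centers $\va \in \mathcal{B}_\sigma$ that expand a given current by a factor of $(2m+2n+\epsilon)\upvartheta_K$ or more is at most $1/(2m+2n+\epsilon)$. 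A union bound over the $2m+2n$ currents shows that the fraction of centers bad for at least one current is at most $(2m+2n)/(2m+2n+\epsilon) < 1$; this is precisely why the strictly positive slack $\epsilon$ is required, since with $\epsilon = 0$ the bad fraction could equal $1$. Hence a positive-measure set of centers bounds the expansion of every current by $(2m+2n+\epsilon)\upvartheta_K$ at once, and I would select one such center in each simplex at each step.

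Next I would compound these per-step bounds exactly as in \cref{thm:sdt}, replacing $4\upvartheta_K$ by $(2m+2n+\epsilon)\upvartheta_K$ throughout. Since each $T_i$ is pushed through $p-d$ steps and each $S_j$ through $p-d-1$ steps (stopping at the $(d+1)$-skeleton), substituting the dimensions $d$ and $d+1$ into the single-current estimates yields the stated exponents for $\mass(P_i)$, $\mass(\boundary P_i)$, $\mass(O_j)$, $\mass(\boundary O_j)$, and the corresponding flat-norm bounds. For the linearity and boundary-commuting claims, I would define $\pi_K$ as the composition over all projection steps of the pushforwards under the fixed straight-line projection maps determined by the chosen common centers. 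Each such pushforward is linear and commutes with $\boundary$ (the standard identity $f_\#\boundary A = \boundary f_\# A$ for pushforward under a Lipschitz map), so the composition $\pi_K$ inherits both properties, and the stated identities follow immediately, using $\pi_K(\boundary S_j) = \boundary \pi_K(S_j) = \boundary O_j$.

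I expect the main obstacle to be conceptual rather than computational: verifying that forcing a common center is genuinely what buys linearity, and that the existence argument remains compatible with it. The point is that \cref{thm:sdt} applied separately to each current would in general choose different centers for different currents, so the resulting per-current maps would not assemble into a single linear map; the union-bound selection above instead produces one map used for every current, at the cost of inflating the expansion factor from $4$ to $2m+2n+\epsilon$. One must also keep careful track of the fact that at each step the averaging bound is applied to the \emph{already-pushed} versions of the currents produced by the previous steps, which is legitimate because \cref{eq:avgbound} is agnostic to how the current being projected arose.
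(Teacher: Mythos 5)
Your proposal is correct and matches the paper's proof in all essentials: both arguments apply a Markov-type estimate to the averaged expansion bound \cref{eq:avgbound} and a union bound over the $2(m+n)$ currents $T_i$, $\boundary T_i$, $S_j$, $\boundary S_j$ to find a common center of positive measure at each projection step, then obtain linearity and boundary-commutation from the fixed pushforward maps. The only difference is cosmetic --- the paper phrases the slack via bad sets $H_{\cdot,k}$ with threshold $(2m+2n+\tfrac{1}{k})\upvartheta_\sigma$ and takes $k > 1/\epsilon$, where you use the threshold $(2m+2n+\epsilon)\upvartheta_K$ directly.
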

\begin{proof}
We must show that there are centers in the set of feasible centers ${\cal B}_\sigma$ (see the proof sketch of \cref{thm:sdt}) which simultaneously achieve the various bounds on the $2(m+n)$ relevant currents: $T_1, \dots, T_m$, $\boundary T_1, \dots \boundary T_m$, $S_1, \dots S_n$, and $\boundary S_1, \dots, \boundary S_n$.

We consider the case of projecting currents from the $\ell$-skeleton to the $(\ell-1)$-skeleton in the $\ell$-simplex $\sigma$.
As in the proof of \cref{thm:sdt}, we again use the average bound in \cref{eq:avgbound}.
For each $k \in \Z^+$ and $i = 1, 2, \dots, m$, let 
\[
H_{T_i,k} = \left\{\va \in {\cal B}_\sigma \,\middle|\, \int_\sigma J_d \phi(\vx,\va) \, {\rm d}\norm{T_i}(\vx) > \left(2m+2n+\frac{1}{k}\right)\upvartheta_\sigma\mass(T_i)\right\}.
\] 
Then, using the same average-based argument as in \cref{thm:sdt}, we have that $\mathcal{H}^\ell(H_{T_i,k})/\mathcal{H}^\ell({\cal B}_\sigma) < \frac{1}{2m+2n}$ (i.e., the size of the set of poorly behaved centers with respect to each $T_i$ is a small fraction of the set ${\cal B}_\sigma$ of possible centers).
We similarly define $H_{\boundary T_i,k}$, $H_{S_j,k}$, and $H_{\boundary S_j, k}$ and obtain the same bound of $\frac{1}{2m+2m}$ on the bad centers.
For each $k \in \Z^+$, we are interested in the set of centers which are simultaneously good centers for all currents involved (i.e., points in ${\cal B}_\sigma$ but not any of the $H_{\cdot, k}$ sets).
Call this set $G_k$ and observe that it has positive measure:
\begin{align*}
{\cal H}^\ell(G_k) &={\cal H}^\ell\left({\cal B}_\sigma \backslash\left(\bigcup_{i=1}^m H_{T_i, k} \cup \bigcup_{i=1}^m H_{\boundary T_i, k}\cup \bigcup_{i=1}^n H_{S_i,k} \cup \bigcup_{i=1}^n H_{\boundary S_i,k}\right)\right)\\
&\geq {\cal H}^\ell({\cal B}_\sigma) - \sum_{i=1}^m{\cal H}^\ell(H_{T_i, k}) - \sum_{i=1}^m{\cal H}^\ell(H_{\boundary T_i, k}) - \sum_{j=1}^n{\cal H}^\ell(H_{S_j,k}) - \sum_{j=1}^n{\cal H}^\ell(H_{\boundary S_j,k})\\
&> {\cal H}^\ell({\cal B}_\sigma)\left(1 - \frac{m}{2m+2n} - \frac{m}{2m+2n} - \frac{n}{2m+2n} - \frac{n}{2m+2n}\right)\\
&=0.
\end{align*}
Thus for any $k > \frac{1}{\epsilon}$ we have that $G_k$ is a nonempty set of possible projection centers which simultaneously attain an expansion bound of at most $(2m+2n + \epsilon)\upvartheta_\sigma$ for all the pertinent currents.

The projection operator is clearly linear and commutes with the boundary operator as a consequence of properties\cite[4.1.6]{Federer1969} of the differential forms to which currents are dual.
\end{proof}

\begin{corollary}
\label[corollary]{cor:multisdtsdt}
The bounds in \cref{thm:sdt} can all be tightened by replacing $4\upvartheta_K$ with $(2+\epsilon) \upvartheta_K$ for $\epsilon > 0$.
\end{corollary}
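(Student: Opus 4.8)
The plan is to recognize \cref{cor:multisdtsdt} as the single-current specialization of \cref{thm:multisdt}. Indeed, the bounds on $\mass(P)$, $\mass(\boundary P)$, and $\F(T,P)$ in \cref{cor:multisdtsdt} are exactly the $m=1$, $n=0$ instance of \cref{thm:multisdt}, in which the expansion factor reads $(2m+2n+\epsilon)\upvartheta_K = (2+\epsilon)\upvartheta_K$. The factor $4$ in \cref{thm:sdt} arises solely from the center-choice step of its proof: pushing a single normal $d$-current $T$ one dimension at a time requires simultaneously controlling the expansion of $T$ and of $\boundary T$, and the original argument discards the fraction $\tfrac14$ of centers expanding $T$ by $4\upvartheta_\sigma$ or more together with the fraction $\tfrac14$ expanding $\boundary T$ by $4\upvartheta_\sigma$ or more, keeping at least half the centers. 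Since all five controls of \cref{thm:sdt} enter the value $4\upvartheta_K$ only through this per-projection expansion factor, it suffices to produce good centers with a smaller factor and then propagate it.

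First I would fix an $\ell$-simplex $\sigma$ and, as in the proof of \cref{thm:multisdt}, apply Markov's inequality to the average bound \cref{eq:avgbound}. For each $k \in \Z^+$ set
\begin{align*}
H_{T,k} &= \left\{\va \in \mathcal{B}_\sigma \,\middle|\, \int_\sigma J_d \phi(\vx,\va)\, \dif\norm{T}(\vx) > \left(2+\tfrac1k\right)\upvartheta_\sigma \mass(T|_\sigma)\right\},
\end{align*}
and define $H_{\boundary T,k}$ analogously with $\boundary T$ in place of $T$. The averaging estimate \cref{eq:avgbound} forces $\mathcal{H}^\ell(H_{T,k})/\mathcal{H}^\ell(\mathcal{B}_\sigma) < \tfrac12$ and likewise for $H_{\boundary T,k}$, so their union omits a subset of $\mathcal{B}_\sigma$ of positive measure. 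Hence for any $k > 1/\epsilon$ there is a center in $\mathcal{B}_\sigma \setminus (H_{T,k}\cup H_{\boundary T,k})$ that expands both $T$ and $\boundary T$ by at most $(2+\epsilon)\upvartheta_\sigma$. This is precisely the computation showing $\mathcal{H}^\ell(G_k) > 0$ in \cref{thm:multisdt}, read with a single $d$-current and no $(d+1)$-currents.

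Next I would carry the factor $(2+\epsilon)\upvartheta_K$ through the projection-by-dimension construction of \cref{thm:sdt} verbatim, substituting $(2+\epsilon)\upvartheta_K$ for $4\upvartheta_K$ at every projection from the $\ell$-skeleton to the $(\ell-1)$-skeleton. Because $P$, its boundary $\boundary P$, the deformation chain $Q$, the homotopy $R$, and the telescoping estimate giving $\F(T,P)$ are all assembled from the same straight-line projections, each inherits the smaller per-step factor, yielding all five bounds of \cref{thm:sdt} with $4\upvartheta_K$ replaced by $(2+\epsilon)\upvartheta_K$.

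I do not expect a genuine obstacle; the only point requiring care is bookkeeping for the two bounds—on $\mass(Q)$ and $\mass(R)$—that \cref{thm:multisdt} does not state explicitly. One must confirm that these depend on the center choice only through the numerical expansion factor, and not through any feature special to the value $4$. Since $Q$ and $R$ are the deformation chain and its homotopy produced at each projection step, tracing the original derivation of \cref{thm:sdt} shows their bounds are built from the same per-projection factor, so the substitution $4\upvartheta_K \mapsto (2+\epsilon)\upvartheta_K$ propagates uniformly through them as well.
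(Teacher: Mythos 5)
Your proposal is correct and follows essentially the same route as the paper, whose entire proof of \cref{cor:multisdtsdt} is to take $m=1$ and $n=0$ in \cref{thm:multisdt}, so that the expansion factor $(2m+2n+\epsilon)\upvartheta_K$ becomes $(2+\epsilon)\upvartheta_K$. Your extra step of tracing the bounds on $\mass(Q)$ and $\mass(R)$ through the projection construction (since \cref{thm:multisdt} does not state them explicitly) is a sound piece of diligence that the paper leaves implicit, not a departure from its argument.
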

\begin{proof}
Simply taken $m = 1$ and $n = 0$ in \cref{thm:multisdt}.
\end{proof}

For a $2$-complex $K$, the minimum angle over all triangles in the complex is easier to work with, and can be used as a proxy for our simplicial regularity constant as \cref{lem:2danglereg} indicates.

\begin{lemma}
\label{lem:2danglereg}
A lower bound on the minimum angle of all triangles in a $2$-complex implies an upper bound on the simplicial regularity constant.
That is, given a $2$-complex $K$ with minimum angle at least $\theta$, we have $\upvartheta_K \leq C_\theta$ for some constant $C_\theta$.
\end{lemma}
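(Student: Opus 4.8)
The plan is to reduce the supremum in \cref{eq:regularity} to the cases of edges and triangles, and then to show that the minimum-angle hypothesis controls each of the two shape ratios appearing in $\upvartheta_K$. Since every quantity in \cref{eq:regularity} is scale invariant (both $\diam$ and $\inrad$ scale linearly under dilation, and $B_\sigma$ and $\perimeter$ scale homogeneously), I may normalize each $2$-simplex so that its longest edge, which equals its diameter, has unit length.

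First I would dispose of the low-dimensional simplices. For a $1$-simplex (edge) of length $L$ one computes directly that $\diam = L$, $\perimeter = \mathcal{H}^0(\boundary\sigma) = 2$, $\inrad = L/2$, and $B_\sigma = L/2$, so that $\diam\cdot\perimeter/B_\sigma = 4$ and $\diam/\inrad = 2$, both absolute constants independent of $\theta$. The $0$-simplices contribute degenerately and may be ignored. Hence the only terms that can blow up come from the $2$-simplices, and the whole problem reduces to bounding the two triangle ratios.

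The heart of the argument is a lower bound on the inradius of a triangle in terms of its minimum angle. For a triangle with longest side normalized to $1$ and angles $A, B, C \geq \theta$, the law of sines gives $\operatorname{area} = \sin A \sin B / (2\sin C)$; since each of $A$ and $B$ lies in $[\theta, \pi - 2\theta]$ we have $\sin A, \sin B \geq \sin\theta$ while $\sin C \leq 1$, so the area is at least $\tfrac{1}{2}\sin^2\theta$. The semiperimeter is at most $3/2$ (all three sides are bounded by the diameter $1$), whence $\inrad = \operatorname{area}/s \geq \sin^2\theta/3$ and therefore $\diam/\inrad \leq 3/\sin^2\theta$. For the first ratio I would combine $\perimeter \leq 3\diam$ with $B_\sigma = \tfrac{\pi}{4}\inrad^2$ to get $\diam\cdot\perimeter/B_\sigma \leq (12/\pi)(\diam/\inrad)^2 \leq 108/(\pi\sin^4\theta)$. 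Taking the suprema over all simplices and adding, the bound $C_\theta := 8 + 108/(\pi\sin^4\theta) + 6/\sin^2\theta$ then dominates $\upvartheta_K$, as required.

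I expect the main obstacle to be exactly this inradius lower bound, i.e., ruling out the degeneracy of triangles: a priori a triangle of bounded diameter could still be an arbitrarily thin sliver with vanishing inradius, and it is precisely the minimum-angle hypothesis that excludes this. Packaging the estimate cleanly — fixing the normalization and identifying which angle and side play which role, so that the elementary trigonometric inequalities hold uniformly over the admissible range $[\theta, \pi - 2\theta]$ — is the crux; the edge computation and the perimeter bound are routine once that estimate is in hand.
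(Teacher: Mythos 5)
Your proof is correct, and it shares the paper's overall skeleton --- reduce to bounding the two shape ratios $\diam(\sigma)/\inrad(\sigma)$ and $\diam(\sigma)\perimeter(\sigma)/B_\sigma$ --- but the key inradius estimate is obtained by a genuinely different route. The paper invokes the law of cotangents, $\inrad(\sigma) = \del{s-c}\tan(\gamma/2)$ with $\gamma$ the smallest angle (opposite the shortest side $c$), and gets $\diam(\sigma)/\inrad(\sigma) \leq 2\cot(\gamma/2) \leq 2\cot(\theta/2)$ in two lines, yielding $C_\theta = \frac{48}{\pi}\cot(\theta/2)^2 + 4\cot(\theta/2)$, which grows like $\theta^{-2}$ as $\theta \downarrow 0$. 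You instead lower-bound the area via the law of sines (area at least $\frac{1}{2}\sin^2\theta$ after normalizing the diameter to $1$) and convert through $\text{area} = \inrad \cdot s$, getting $\diam(\sigma)/\inrad(\sigma) \leq 3/\sin^2\theta$ and $C_\theta = 8 + 108/(\pi\sin^4\theta) + 6/\sin^2\theta$, which grows like $\theta^{-4}$; since the lemma only asserts that \emph{some} $C_\theta$ exists, and downstream (\cref{thm:2dboundreg}) the constant is evaluated only at fixed angles ($30^\circ$, giving $\cot(15^\circ)$ in the paper), the weaker asymptotics cost nothing. Your version is actually more careful on two side points the paper leaves implicit: you verify scale invariance, and you account for the $1$-simplices in the supremum of \cref{eq:regularity} (contributing the absolute constant $8$, consistent with the paper's claim $\upvartheta_K = 8$ in the $p=1$ case of \cref{thm:2dboundreg}), whereas the paper's displayed formula restricts attention to triangles without comment. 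One micro-gap worth closing in your writeup: the inequality $\sin A, \sin B \geq \sin\theta$ on the interval $[\theta, \pi - 2\theta]$ rests on concavity of sine together with $\sin(\pi - 2\theta) = \sin 2\theta \geq \sin\theta$, and the latter holds exactly because $\theta \leq \pi/3$ --- which is automatic for a nondegenerate triangle with minimum angle $\theta$, but should be said.
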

\begin{proof}
The simplicial regularity constant $\upvartheta_K$ used for \cref{thm:sdt,thm:multisdt} in the case of triangles is given by
\[
\upvartheta_K = \frac{4}{\pi}\sup_{\sigma \in K}\frac{\diam(\sigma)\perimeter(\sigma)}{\inrad(\sigma)^2} + 2 \sup_{\sigma \in K}\frac{\diam(\sigma)}{\inrad(\sigma)}.
\]
We observe that bounding $\diam(\sigma)/\inrad(\sigma)$ and $\perimeter(\sigma)/\inrad(\sigma)$ for all triangles $\sigma \in K$ yields a bound for $\upvartheta_K$.
Suppose $\sigma$ has side lengths $a \geq b \geq c$ and angle $\gamma$ opposite $c$.
Using the law of cotangents, we obtain
\[
\frac{\diam(\sigma)}{\inrad(\sigma)} = \frac{a\cot(\gamma/2)}{(a+b)/2-c/2} \leq \frac{a\cot(\gamma/2)}{(a+b)/2-b/2} = 2\cot(\gamma/2) \leq 2\cot(\theta/2).
\]
The bound for $\perimeter(\sigma)/\inrad(\sigma)$ follows easily from this observation:
\[
\frac{\perimeter(\sigma)}{\inrad(\sigma)} \leq \frac{3\diam(\sigma)}{\inrad(\sigma)} < 6\cot(\theta/2).
\]
Thus we can take $C_\theta = \frac{48}{\pi}\cot(\theta/2)^2 + 4\cot(\theta/2)$.
\end{proof}

\medskip
Our result relies on the ability to localize irregularities via subdivision, focusing on localization rather than removal because the latter is not possible.
For example, any subdivision of a 2-complex with a very small input angle will have an angle that is at least as small.
With that in mind, we require that subdivisions be possible which push the irregularities into the corners.
That is, the irregularity should be bounded by a constant (independent of the complex) away from the skeleton of the original complex and a complex-dependent constant (reflecting the necessity of some bad simplices) near the skeleton.
\cref{con:boundreg} formalizes this requirement and \cref{thm:2dboundreg} notes some cases where it holds.
We present our main theorem in such a way that proving \cref{con:boundreg} more generally will automatically extend our results.

\begin{conjecture}
\label{con:boundreg}
For any $p$-dimensional simplicial complex $K$ in $\R^q$ and $\epsilon > 0$, it is possible to subdivide $K$ so that all simplices are of bounded ``badness'' (with bound independent of $K$ or $\epsilon$) except possibly for simplices in a region of $p$-dimensional volume less than $\epsilon$ near the $(p-1)$-skeleton; even these simplices have bounded badness (dependent on $K$ but not $\epsilon$).
More precisely, there exists a subdivision $M_\epsilon$ of $K$ and a subcomplex $M_\epsilon'$ of $M_\epsilon$ (with simplicial regularity constants $\upvartheta_{M_\epsilon}$ and $\upvartheta_{M_\epsilon'}$) such that:
\begin{enumerate}
\item $M_\epsilon \backslash M_\epsilon' \subseteq \{\vx \in \R^q \,\mid\, \|\vx-\vy\| < \epsilon \text{ for some $\vy$ in the $(p-1)$-skeleton of $K$}\}$,
\item $\upvartheta_{M_\epsilon} \leq \alpha_K$ for some constant $\alpha_K$, and
\item $\upvartheta_{M_\epsilon'} \leq \beta$ for some fixed constant $\beta$.
\end{enumerate}
In particular, $\alpha_K$ does not depend on $\epsilon$ and $\beta$ does not depend on $K$ or $\epsilon$.
The simplicial regularity constants are defined as in \cref{eq:regularity}.
\end{conjecture}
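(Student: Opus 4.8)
The plan is to prove \cref{con:boundreg} not by a single global construction but by separating the complex into a ``core'' that can be meshed with universally good quality and a thin ``collar'' around the $(p-1)$-skeleton where the unavoidable irregularities are confined. Fix $\epsilon > 0$ and let $N_\epsilon$ denote the open $\epsilon$-neighborhood of the $(p-1)$-skeleton of $K$. I would take $M_\epsilon'$ to be the subcomplex of the eventual subdivision $M_\epsilon$ whose simplices lie outside $N_\epsilon$, so that condition (1) holds by construction; the entire task then reduces to producing a subdivision that is simultaneously quality-controlled on the core (giving the universal bound $\beta$) and merely bounded on the collar (giving the $K$-dependent bound $\alpha_K$).

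First I would mesh the core. Away from the $(p-1)$-skeleton, every point lies in the relative interior of a single top-dimensional simplex of $K$, so there are no shared faces, incident sharp dihedral angles, or other input features to respect; the region is locally just a flat piece of $\R^p$. I would fill it with a quality mesh of uniformly bounded aspect ratio, for instance a suitably scaled lattice (Freudenthal/Kuhn) triangulation or the output of Delaunay refinement, whose simplices are all close to a fixed reference shape. By \cref{lem:2danglereg} in the planar case (and its aspect-ratio analogue for the regularity constant of \cref{eq:regularity} in general), such simplices have $\upvartheta$ bounded by a constant $\beta$ that depends on neither $K$ nor $\epsilon$. This is the source of property (3).

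The substance is the collar and the compatibility between the two meshes. Adjacent top simplices must agree on the subdivision induced on their shared $(p-1)$-faces, and the mesh inside $N_\epsilon$ must grade from this induced subdivision out to the uniform core mesh. It is precisely here that small input angles of $K$ force some badly shaped simplices, and the best one can hope for is that their irregularity is controlled by the original geometry of $K$, yielding the $K$-dependent bound $\alpha_K$ in property (2), while their location is pinned to $N_\epsilon$. In $\R^2$ this collar construction is supplied exactly by Shewchuk's Terminator algorithm \cite{Sh2002}: run on the planar straight-line graph given by the $1$-skeleton of $K$, it returns a conforming Delaunay refinement in which every triangle meets a fixed minimum-angle bound except for triangles in localized regions near the input vertices incident to small angles, which lie on the $1$-skeleton. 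Combining the minimum-angle guarantee with \cref{lem:2danglereg} then gives $\beta$ on $M_\epsilon'$ and $\alpha_K$ on the collar, establishing \cref{con:boundreg} in the plane (this is \cref{thm:2dboundreg}).

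I expect the main obstacle to be the collar construction in dimension $p \ge 3$. There is no clean angle proxy for the regularity constant beyond the planar case, and provable quality mesh generation that both respects the input complex and confines all poorly shaped simplices to a codimension-$1$ neighborhood is genuinely hard: Delaunay refinement in three and higher dimensions must contend with slivers (simplices with good facet angles but degenerate volume), and known sliver-removal techniques do not obviously keep the bad simplices inside a thin collar of controlled $\upvartheta$. Guaranteeing the universal constant $\beta$ on the core while localizing the $K$-dependent badness to $N_\epsilon$ is the heart of what keeps the statement a conjecture in general; a suitable higher-dimensional analogue of the Terminator algorithm would immediately discharge it and, through \cref{thm:icmainresult}, extend our results to all codimension-$1$ integral currents.
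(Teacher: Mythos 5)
Your overall architecture (confine the unavoidable bad simplices to a thin collar around the $(p-1)$-skeleton, get a universal bound on the core, and correctly treat the general statement as open beyond the cases of \cref{thm:2dboundreg}) matches the paper, but your planar argument has a concrete gap: you run Shewchuk's Terminator directly on the $1$-skeleton of $K$ and assert that the resulting small angles are ``localized'' near it. Shewchuk's guarantee confines new small angles to skinny triangles whose circumcenters encroach upon subsegment clusters bearing small input angles, and such triangles lie within the diametral circles of those subsegments --- but nothing in the algorithm bounds the subsegment lengths by $\epsilon$. A $1^\circ$ wedge between two long input edges can therefore carry skinny triangles at distance comparable to the edge lengths from the $1$-skeleton, so condition (1) fails for small $\epsilon$. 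The paper's proof of \cref{thm:2dboundreg} supplies exactly the missing device: before refining, it superimposes a square grid of cell diameter $\delta$ (chosen so the $3\delta$-neighborhood of the $1$-skeleton has measure less than $\epsilon$), which chops every segment to length at most $\delta$; every subsegment cluster bearing a small input angle then lies within $2\delta$ of the $1$-skeleton, and hence all small angles of $M_\epsilon$ lie within the $3\delta$ tube, which is what pins $M_\epsilon \backslash M_\epsilon'$ inside the $\epsilon$-collar.

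The grid also resolves a second problem that your two-mesh plan (uniform lattice core plus graded collar) leaves unaddressed: conforming the core mesh to the collar mesh can itself create small angles, and if these degrade as $\epsilon \downarrow 0$ then $\alpha_K$ depends on $\epsilon$, contradicting condition (2). The paper avoids any core/collar interface entirely via a rotation argument: the grid contributes only two edge directions, $K$ contributes finitely many (say $\eta$ of them), so a rotation exists making every grid--$K$ crossing angle at least $\pi/(2\eta)$, a bound depending on $K$ alone and not on $\delta$ or $\epsilon$. A single pass of Terminator on the whole arrangement then yields minimum angle $\arcsin\bigl((\sqrt{3}/2)\sin(\theta/2)\bigr)$ globally (giving $\alpha_K$ via \cref{lem:2danglereg}) and $30^\circ$ outside the tube (giving the universal $\beta$), with no separate lattice meshing or grading step. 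Your diagnosis of why $p \geq 3$ remains conjectural (slivers, and the absence of an angle proxy for the regularity constant of \cref{eq:regularity}) does agree with the paper's discussion, but the $2$-dimensional construction as you describe it would not establish the conjecture even in the plane.
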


While we will only prove this conjecture for certain $1$- and $2$-dimensional simplicial complexes, we are not aware of any counterexamples that would constrain the conjecture in higher dimensions.

\begin{theorem}
\label{thm:2dboundreg}
\cref{con:boundreg} holds for:
\begin{itemize}
\item $q \geq p = 1$, ~\mbox{ and }
\item $p = q = 2$.
\end{itemize}
\end{theorem}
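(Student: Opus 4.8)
The plan is to handle the two cases separately: the case $q \geq p = 1$ is elementary, while the case $p = q = 2$ reduces to Shewchuk's algorithm through \cref{lem:2danglereg}.

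For $q \geq p = 1$, I would first observe that the simplicial regularity constant of \emph{any} $1$-simplex is a fixed universal number, independent of its length and of the ambient dimension $q$. Indeed, for a segment $\sigma$ of length $\ell$ we have $\diam(\sigma) = \ell$, $\perimeter(\sigma) = 2$ (the counting measure of its two endpoints), $\inrad(\sigma) = \ell/2$, and $B_\sigma = \ell/2$ (the $1$-volume of an interval of radius $\ell/4$), so the per-simplex constant of \cref{eq:regularity} is $\upvartheta_\sigma = 4 + 4 = 8$. Since the $(p-1)$-skeleton is the vertex set, I would then subdivide each edge $[u,v]$ of $K$ by inserting two points $u',v'$ so that the end segments $[u,u']$ and $[v',v]$ each have length less than $\epsilon$ (for edges of length at most $2\epsilon$, leave the edge undivided and assign it entirely to the bad region). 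Let $M_\epsilon$ be the resulting subdivision and $M_\epsilon'$ the subcomplex generated by the middle segments $[u',v']$. Every point of $M_\epsilon \setminus M_\epsilon'$ lies on an end segment and hence within $\epsilon$ of a vertex, giving condition~(1), while conditions~(2) and~(3) hold with $\alpha_K = \beta = 8$ because every $1$-simplex has regularity exactly $8$.

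For $p = q = 2$, the $(p-1)$-skeleton is the $1$-skeleton of $K$, and \cref{lem:2danglereg} reduces the problem to controlling minimum angles. The plan is to feed the $1$-skeleton of $K$ into Shewchuk's Terminator algorithm \cite{Sh2002} as a planar straight line graph, treating every edge of $K$ as a constrained segment so that the output is a genuine subdivision $M_\epsilon$ of $K$. The algorithm guarantees a universal lower bound $\theta_0$ on the angles of all output triangles \emph{except} those incident to input vertices where two segments of $K$ meet at a small angle; near such apices the output angles are bounded below by a quantity $\theta_K$ determined by the smallest input angle of $K$ (hence depending on $K$ but not on $\epsilon$). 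I would take $M_\epsilon'$ to be the subcomplex generated by the triangles all of whose angles are at least $\theta_0$. Applying the constant $C_\theta$ from \cref{lem:2danglereg} then gives $\upvartheta_{M_\epsilon'} \leq C_{\theta_0} =: \beta$ (universal) and $\upvartheta_{M_\epsilon} \leq C_{\theta_K} =: \alpha_K$ (depending only on $K$), which are conditions~(2) and~(3). For condition~(1), each small-angle apex is a vertex of $K$ and so lies on the $1$-skeleton; the skinny triangles are confined near these apices, and I would invoke the graded, shell-based structure the algorithm produces about a small-angle vertex, refining the innermost shells until the outer radius of the skinny region drops below $\epsilon$, so that every skinny triangle lies within $\epsilon$ of its apex and hence of the $1$-skeleton.

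I expect the main obstacle to be exactly this final localization step. Shewchuk's published guarantees describe where small angles may occur and how small they may be, but converting ``near a small-angle apex'' into the quantitative ``within $\epsilon$ of the $1$-skeleton'' requires a careful reading of the algorithm's grading by local feature size and, if necessary, the insertion of additional Steiner points on concentric shells to shrink the bad region. The delicate point is ensuring these extra refinements do not create new skinny triangles outside the $\epsilon$-neighborhood while preserving both the universal bound $\theta_0$ away from the apices and the $K$-dependent bound $\theta_K$ at them; this is precisely the termination-with-small-angles behavior that the Terminator algorithm is designed to supply.
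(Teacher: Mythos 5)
Your $q \geq p = 1$ argument is correct, and in fact more elaborate than necessary: since every $1$-simplex has regularity exactly $8$ regardless of length (your computation matches the paper's), one can simply take $M_\epsilon = M_\epsilon' = K$, making condition~(1) vacuous; your end-segment subdivision is harmless but buys nothing. For $p = q = 2$, however, there is a genuine gap, and it sits exactly where you flagged it. Shewchuk's guarantee confines newly created small angles to skinny triangles whose circumcenters encroach upon subsegment clusters bearing small input angles, and the spatial extent of that bad region scales with the \emph{lengths of the subsegments in those clusters} --- i.e., with the local feature scale of the input PSLG --- not with $\epsilon$. If you feed the raw $1$-skeleton of $K$ into the Terminator algorithm, the segments meeting at a small-angle apex can be long, so nothing confines the skinny triangles to an $\epsilon$-neighborhood of the skeleton. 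Your proposed fix (grading by local feature size plus concentric-shell refinement near apices) is not among the algorithm's stated guarantees; you would have to prove both that the extra Steiner insertions terminate without spawning skinny triangles outside the $\epsilon$-tube and that they preserve the $\epsilon$-independent angle bound $\theta_K$, since \cref{con:boundreg} requires $\alpha_K$ to be independent of $\epsilon$. As written, the localization step is asserted, not proved.

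The paper closes this gap with a preprocessing step your proposal lacks: before invoking Terminator, it superimposes a square grid of cell diameter $\delta$ (with $\delta$ chosen so the $3\delta$-neighborhood of the $1$-skeleton has measure less than $\epsilon$), rotated so that the angles between the two grid directions and the finitely many edge directions of $K$ are bounded below by $\pi/(2\eta)$, where $\eta$ is the number of such directions. This accomplishes two things at once. First, since the rotation bound depends only on $K$ and not on $\delta$, the minimum input angle of the augmented PSLG --- and hence, via Shewchuk's $\arcsin\bigl((\sqrt{3}/2)\sin(\theta/2)\bigr)$ bound and \cref{lem:2danglereg}, the constant $\alpha_K$ --- is independent of $\epsilon$. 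Second, and crucially, every segment is chopped to length at most $\delta$, so all subsegment clusters bearing small input angles lie within $2\delta$ of the $1$-skeleton and all skinny output triangles lie within $3\delta$ of it; taking $M_\epsilon'$ to be the triangles not fully contained in the $3\delta$-tube, all of whose angles are at least $30^\circ$, yields the universal $\beta$. In effect, any rigorous version of your shell refinement would be re-implementing this segment-shortening near the skeleton; the grid superposition (together with the rotation argument, which your framework never confronts because you add no new segments) is the missing ingredient that converts ``near a small-angle apex'' into the quantitative ``within $\epsilon$ of the $1$-skeleton.''
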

\begin{proof}
The $p = 1$ case is trivial as all 1-simplices have the same regularity so we have $\upvartheta_K = 8$ and can take $M_\epsilon = M_\epsilon' = K$.

For the $p = q = 2$ case, we proceed in two steps.
First we will superimpose a square grid on $K$ (orienting it to bound the minimum angle created between its edges and those of $K$), creating a cell complex which is a refinement of $K$.
Next we use Shewchuk's Terminator algorithm\cite{Sh2002} to further refine the cell complex back into a simplicial complex with bounds on the minimum angle and, crucially, restrictions on where these small angles can be so that we can obtain regularity bounds.

By superimposing a fine enough square grid, we can force the small angles (whether already present in the complex or newly created) to occur only in a small measure subset of the complex.
Pick $\delta > 0$ small enough so that the set
\[
\{\vx \in \R^2 \,\mid\, \vy\text{ lies on the $1$-skeleton of $K$, } \|\vx-\vy\| < 3\delta\}
\]
 has measure less than $\epsilon$.
Let $G$ be a finite square grid in $\R^2$ whose cells each have diameter $\delta$ such that $G$ covers the underlying space of $K$ in any rotation.
Note that there are only two directions present in $G$ so if we bound all possible angles created between these directions and the edges of $K$, we can bound the minimum new angle created by superimposing $G$.

Let $\vw \in \R^2$ be a fixed unit vector and define
\[
E = \left\{\phi, \phi + \frac{\pi}{2} \,\mid\, \phi \text{ is the angle between $\vu-\vv$ and $\vw$ for some edge $(\vu,\vv) \in K$}\right\}.
\]
Further let $E^\theta = \{\psi \in [0, 2\pi) \mid |\phi-\psi| < \theta\text{ for some }\phi \in E\}$.
This is the set of angles to avoid when rotating $G$ in order to guarantee all created angles will be $\theta$ or larger.

Denote by $\eta < \infty$ the cardinality of $E$ and note that $[0, 2\pi) \backslash E^\frac{\pi}{2\eta}$ has positive measure, so there exist rotations of the square grid that create no new angles smaller than $\frac{\pi}{2\eta}$.

After superimposing a suitably rotated version of $G$, we obtain a new cellular complex which is a refinement of $K$.
This is a planar straight line graph which can be used as input to Shewchuk's Terminator algorithm\cite{Sh2002}, which refines it into a simplicial complex $M_\epsilon$ with guarantees on the minimum angle bound of the resulting complex and where the small angles can occur.

In particular, if $\theta$ is the minimum angle in the cellular complex (either present originally or created by the square grid superposition), then the minimum angle of $M_\epsilon$ is at least $\arcsin((\sqrt{3}/2)\sin(\theta/2))$.
Furthermore, no angles less than $30^\circ$ are created by the algorithm except in the vicinity of angles less than $60^\circ$.

Specifically, the algorithm proceeds by iteratively splitting segments and triangles and has a concept of a subsegment cluster which is a collection of nearby subsegments such that splitting one triggers splits in all of the others.
A vertex is said to encroach on a subsegment if it is contained within the circle using the subsegment as a diameter.
Encroachment is used to identify candidates for further splitting.

Any newly created small angles must be part of a skinny triangle whose circumcenter encroaches upon a subsegment cluster bearing a small input angle.
As all such subsegment clusters must be contained within a distance of $2\delta$ of the 1-skeleton of $K$ by design, we have that all small angles in $M_\epsilon$ are within $3\delta$ of the 1-skeleton of $K$.

Let $M_\epsilon'$ be the subcomplex of $M_\epsilon$ containing all triangles not fully contained in the $3\delta$ tube.
Noting that all angles in $M_\epsilon'$ are at least $30^\circ$, we get by \cref{lem:2danglereg} that
\[
\upvartheta_{M_\epsilon'} \leq \frac{48}{\pi}\cot(15^\circ)^2 + 4\cot(15^\circ) = \frac{4(2+\sqrt{3})(24+12\sqrt{3}+\pi)}{\pi}. 
\]
We may take $\beta$ to be this quantity, noting that it is independent of $\epsilon$ and $K$.
The minimum angle bound $\theta$ for $M_\epsilon$ and \cref{lem:2danglereg} give us a bound $\alpha_K$ for $\upvartheta_{M_\epsilon}$ (independent of $\epsilon$).
\end{proof}

The following theorem shows that the bounds in \cref{thm:multisdt} may be replaced with constants independent of the complex as well as the currents involved, if we subdivide the complex by means of \cref{con:boundreg} (the subdivision does depend on the currents and the complex, of course).

\begin{theorem}
\label{thm:2dboundedsdt}
Suppose we have integers $d < s \leq q$ and that \cref{con:boundreg} holds for the given $q$ and every $p$ such that $d-1 \leq p$ and $p \leq s$ (that is, suppose we can isolate the irregularities of any $p$-complex in $\R^q$ by suitable subdivision).
Given an $s$-dimensional simplicial complex $K$ in $\R^q$ and a set of $d$-currents $T_1, \dots, T_m$ and $(d+1)$-currents $S_1, \dots, S_n$ in the underlying space of $K$ with $d < s$, there exists a complex $K'$ which is a subdivision of $K$ such that we have all of the conclusions of \cref{thm:multisdt} (i.e., mass and flat norm bounds and linear projection of the $T_i$ and $S_j$ to $K'$) except the simplicial regularity constant $\upvartheta_{K'}$ in the various bounds can be replaced with a constant $L$ that does not depend on $K$.
\end{theorem}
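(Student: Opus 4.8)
The plan is to build $K'$ by subdividing $K$ so that every poorly shaped simplex is trapped in a thin tube around a low-dimensional skeleton, and then to exploit the fact that the mass expansion in the deformation theorem is \emph{local}: the proof of \cref{thm:sdt} (and hence \cref{thm:multisdt}) bounds the expansion of a current inside a single $\ell$-simplex $\sigma$ by $c\,\upvartheta_\sigma$, where $c = 2m+2n+\epsilon$ and $\upvartheta_\sigma$ is the per-simplex regularity. Thus the badly shaped simplices only inflate the portion of current mass that happens to lie in the tubes, and if the tubes are thin enough that portion is negligible.

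First I would invoke \cref{con:boundreg} at each dimension $p$ with $d-1 \le p \le s$ to produce a single subdivision $K'$ together with a subcomplex $K''$ so that $\upvartheta_{K''} \le \beta$ (a fixed constant) while the complementary region $K'\setminus K''$ lies in a tube of $p$-volume less than $\epsilon$ about the $(p-1)$-skeleton and still satisfies $\upvartheta_{K'} \le \alpha_K$. I would position this subdivision generically (as the rotation of the grid is chosen in the proof of \cref{thm:2dboundreg}) so that none of the finitely many skeletons involved carries any mass of the currents $T_i$, $\boundary T_i$, $S_j$, or $\boundary S_j$; since there are only finitely many currents and only a measure-zero set of forbidden placements, such a placement exists.

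Next I would run \cref{thm:multisdt} on $K'$ to obtain a single family of projection centers, and re-examine its mass estimate one projection step at a time. Splitting each projected current according to whether the source simplex lies in $K''$ or in the tube gives, at the step from the $\ell$- to the $(\ell-1)$-skeleton,
\[
\mass(\text{projected}) \;\le\; c\beta\,\mass(T) + c\,\alpha_K\,\mass\!\left(T|_{\text{tube}}\right).
\]
Because the tubes shrink to the (current-null) skeletons as $\epsilon\downarrow 0$, continuity from above of the Radon mass measures forces $\mass(T|_{\text{tube}})\to 0$; choosing $\epsilon$ small enough that $\alpha_K\,\mass(T|_{\text{tube}})\le \beta\,\mass(T)$ at every step makes each step expand mass by at most $2c\beta$. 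Composing the $s-d$ steps for the $T_i$ and the $s-d-1$ steps for the $S_j$ (together with the corresponding boundary estimates) reproduces the bounds of \cref{thm:multisdt} with $\upvartheta_{K'}$ replaced by the complex-independent constant $L = 2\beta$. Linearity of the projection and commutation with $\boundary$ are inherited verbatim from \cref{thm:multisdt}, since we reuse its centers.

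The main obstacle is that the subdivision $K'$ --- hence the size of every tube --- must be fixed \emph{before} any projection is performed, yet the mass that lands in a tube at level $\ell-1$ is produced by the projection from level $\ell$, which can concentrate mass toward lower skeletons; I therefore cannot treat the tube masses as independent of the projections. My remedy is to control this accumulation a priori: processing the levels from $s$ downward, the projection already fixed at the higher levels is Lipschitz with a bound governed by $\beta$ on $K''$ and by $\alpha_K$ on the (small) tube, so the mass deposited near the next skeleton is a bounded multiple of the incoming mass, and shrinking the tube volume $\epsilon$ far enough dominates this bounded amplification. Making this choice simultaneous --- one fixed, generically placed subdivision whose tube masses are controlled at all $s-d+1$ levels at once --- is the delicate point, and it is exactly where the generic-position argument together with the local, per-simplex nature of the expansion bound is indispensable.
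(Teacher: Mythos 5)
Your local-expansion analysis, the split into a good subcomplex (regularity $\beta$) and a thin tube (regularity $\alpha_K$), the choice of tube size so that $\alpha_K\,\mass(T\currentrestr\text{tube})\leq\beta\,\mass(T)$, and the resulting constant $L=2\beta$ all match the paper's proof. The genuine gap is the order of operations you impose on yourself: you insist on fixing a single subdivision $K'$ before any projection is performed, correctly observe that projection concentrates mass toward lower skeletons so the tube masses at lower levels cannot be estimated in advance, and then offer a remedy (a priori Lipschitz control of the deposited mass) that you do not carry out and that would in fact require a nontrivial multi-level argument: the mass landing in a pre-fixed tube at level $\ell-1$ is the mass of the incoming current in the cone over that tube from the projection centers, and controlling this across $s-d$ composed projections, with centers chosen after the tubes, is precisely what your sketch concedes is ``the delicate point'' without resolving it. The paper dissolves this circularity structurally rather than estimating through it: it interleaves subdivision with projection. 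After projecting to the $(\ell-1)$-skeleton, that skeleton is treated as a fresh $(\ell-1)$-complex, \cref{con:boundreg} is applied to \emph{it} with $\epsilon$ chosen from the \emph{already-projected} currents --- whose tube masses tend to zero by continuity from above, since the tube $N_{\ell-2}^\epsilon$ consists of points at \emph{positive} distance from the skeleton and mass sitting on the skeleton itself is fixed by the projection, entering the bound with factor $1$ --- and only then is the next projection performed. This interleaving is why \cref{thm:2dboundedsdt} hypothesizes \cref{con:boundreg} for every $p$ with $d-1\leq p\leq s$. Note also that your simultaneous subdivision is not actually supplied by \cref{con:boundreg}: subdividing the $(\ell-1)$-skeleton forces a compatible subdivision of the $\ell$-simplices (the paper extends by coning from interior points), and this extension generically creates highly irregular $\ell$-simplices. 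In the interleaved scheme that is harmless, since the currents have already been pushed below dimension $\ell$; in your scheme the level-$\ell$ projection would have to take place inside those ruined simplices, destroying the $\beta$/$\alpha_K$ dichotomy you need at level $\ell$.

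Your generic-position step is also both unavailable and unnecessary. Unavailable, because the tubes of \cref{con:boundreg} surround the $(p-1)$-skeleton of the \emph{given} complex $K$ (and, at later stages, skeletons produced by the construction), none of which you are free to reposition, and the currents in the intended application (the error terms in the main theorem) are arbitrary normal currents that may well charge these skeletons; the rotation in the proof of \cref{thm:2dboundreg} only controls angles between new grid edges and the fixed edges of $K$, not the placement of $K$ itself relative to the currents. Unnecessary, because the paper never needs the skeletons to be current-null: it writes the post-projection mass as the sum of the expansion on $M_\epsilon'$ (factor $(2m+2n+\epsilon)\beta$), the expansion on the tube off the skeleton (at most $(2m+2n+\epsilon)\alpha_K\delta$ with $\delta=(\beta/\alpha_K)\min_{i,j}\{\mass(T_i),\mass(\boundary T_i),\mass(S_j),\mass(\boundary S_j)\}$), and the term $\mass(T_i\currentrestr\skel{p-1}(K))$, which is unchanged by the projection. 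Absorbing the skeleton mass with coefficient $1$ rather than banishing it is what makes the bookkeeping close with $L=2\beta$; if you adopt that device and give up the demand for a single a priori subdivision, your argument becomes the paper's.
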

\begin{proof}
In the simplicial deformation theorems, the current is projected step-by-step to lower dimensional skeletons.
For instance, a $d$-current is projected from the initial $p$-complex to the $(p-1)$-skeleton, then to the $(p-2)$-skeleton, and  eventually down to the $d$-skeleton with one more step to push the current's boundary to the $(d-1)$-skeleton. Each projection is done by picking a center in each simplex and using it to project outward to the boundary of the simplex.
The simplicial regularity constant is used to bound the expansion of mass at each projection step and is defined by \cref{eq:regularity}, a bound on the regularity of all simplices in the complex.

However, this is a bit stronger than required as the projection is a local operation and the bound at each step depends only on the simplicial regularity of the simplex in question.
In addition, there is no reason in principle that we cannot subdivide the complex in between steps.
That is, after pushing to the $\ell$-skeleton, we can further subdivide the complex and then push to the newly refined $(\ell-1)$-skeleton.
In this case, the subdivision need not preserve the simplicial regularity of the $(\ell+1)$- or higher dimensional simplices, as all subsequent pushing steps will take place in lower dimensional simplices.
Moreover, for a given portion of the current, we can use the maximum of the simplicial regularity constants of the simplices it encounters while being pushed (rather than the maximum over {\em all} simplices in the complex).

For all $\epsilon > 0$ and nonnegative integers $k < p$, let $N_{k}^\epsilon$ denote the set of all points in the $(k+1)$-skeleton of $K$ with positive distance less than $\epsilon$ from the $k$-skeleton of $K$ (i.e., all points in the interior of the $(k+1)$-simplices of $K$ which are close to the $k$-skeleton).
Let $T \currentrestr N_{p-1}^\epsilon$ denote the restriction of the current $T$ to the set $N_{p-1}^\epsilon$ and note that since normal currents are representable by integration, the mass of $T \currentrestr N_{p-1}^\epsilon$ can be represented by the integral of a measure over $T \currentrestr N_{p-1}^\epsilon$.
As $\epsilon \downarrow 0$, the measure (and thus the mass of the current) goes to zero:
\begin{align}
\label{eq:limrestr}
\begin{split}
\lim_{\epsilon \downarrow 0} \mass(T_i \currentrestr N_{p-1}^\epsilon) = 0, & \quad
\lim_{\epsilon \downarrow 0} \mass(S_j \currentrestr N_{p-1}^\epsilon) = 0, \\
\lim_{\epsilon \downarrow 0} \mass(\boundary T_i \currentrestr N_{p-1}^\epsilon) = 0, &\quad
\lim_{\epsilon \downarrow 0} \mass(\boundary S_j \currentrestr N_{p-1}^\epsilon) = 0.
\end{split}
\end{align}
Let 
\begin{equation}
\label{eq:currentdelta}
\delta = \frac{\beta}{\alpha_K}\, \min_{1\leq i \leq m,1 \leq j \leq n}\{\mass(T_i), \mass(\boundary T_i), \mass(S_j), \mass(\boundary S_j)\}
\end{equation}
 where $\alpha_K$ and $\beta$ are as in \cref{con:boundreg} and choose $\epsilon > 0$ to make each of the masses in \cref{eq:limrestr} less than $\delta$.
We can apply \cref{con:boundreg} with this $\epsilon$ to obtain a subdivision $M_\epsilon$ of $K$ and a subcomplex $M_\epsilon'$ such that the portion of each of our currents which lies in $M_\epsilon \backslash M_\epsilon'$ and is not already on the $(p-1)$-skeleton (so is not fixed by the first projection) has mass less than $\delta$.
This portion of each current increases in mass by a factor of at most $(2m+2n+\epsilon)\alpha_K$ when projecting to the $(p-1)$-skeleton (see proof of \cref{thm:multisdt}).
Letting $T_i'$ denote the result of projecting $T_i$ to the $(p-1)$-skeleton, we can bound its mass using \cref{eq:currentdelta}:
\begin{align*}
\mass(T_i') &\leq (2m+2n+\epsilon)\Big[\beta\mass(T_i\currentrestr M_\epsilon'\backslash \skel{p-1}(K)) + \alpha_K\mass(T_i \currentrestr M_\epsilon\backslash (M_\epsilon' \cup \skel{p-1}(K)))\Big] \\
&\quad+ \mass(T_i\currentrestr \skel{p-1}(K))\\
&\leq (2m+2n+\epsilon)(\beta\mass(T_i) + \alpha_K\delta)\\
&\leq (2m+2n+\epsilon)(\beta\mass(T_i) + \beta\mass(T_i )) \\
&\leq (2m+2n+\epsilon)(2\beta)\mass(T_i).
\end{align*}
Similar inequalities hold for $S_j$, $\boundary T_i$, and $\boundary S_j$.
In the preceding argument, we have accomplished the goal of projecting all currents involved from the $p$-skeleton to the $(p-1)$-skeleton, and can now consider them as currents in the underlying space of the $(p-1)$-complex $\skel{p-1}(K)$.
We can apply this procedure iteratively (use \cref{con:boundreg} to localize the irregularities and then project) to push to the $(p-2)$- and lower-dimensional skeletons.

When we subdivide each $k$-skeleton using \cref{con:boundreg}, the higher dimensional simplices are not subdivided by default but this is easy to fix.
After a $k$-simplex is subdivided, add a point to the interior of every $(k+1)$-simplex of which it was a face, and connect the new point to every $k$-simplex on its boundary.
This step will likely generate highly irregular simplices, but since we have already pushed the currents down beyond their dimension, it is not an issue.

This argument continues in the same way as \cref{thm:sdt,thm:multisdt}, and proves our result with $L = 2\beta$.
\end{proof}

\noindent We now present our main result.

\begin{theorem}
\label{thm:icmainresult}
If $T$ is an integral $d$-current in $\R^{d+1}$ and \cref{thm:2dboundedsdt} holds for $d$, i.e., $d$- and $(d+1)$-currents can be well approximated by simplicial currents using subdivisions specified by \cref{con:boundreg}, then some flat norm minimizer for $T$ is an integral current.
That is, there is an integral $d$-current $X_I$ and integral $(d+1)$-current $S_I$ such that $\F(T) = \mass(X_I) + \mass(S_I)$ and $T = X_I + \boundary S_I$.
\end{theorem}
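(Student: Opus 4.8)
The plan is to exhibit $T$ as a flat-norm limit of integral polyhedral chains whose \emph{simplicial} flat norms converge to $\F(T)$, and then extract an integral optimal decomposition by compactness. First I would fix an optimal (in general nonintegral) decomposition $T = X + \boundary S$ with $\F(T) = \mass(X) + \mass(S)$; by \cref{lem:normaldecomp} both $X$ and $S$ are normal, so \cref{thm:polyapprox} applies to each of $T$, $X$, and $S$. For each $\delta > 0$ I would produce, via \cref{thm:polyapprox}, an \emph{integral} polyhedral chain $P_\delta$ with $\F(P_\delta - T) < \delta$ and $\mass(P_\delta) < \mass(T) + \delta$ (integrality is available because $T$ is integral), together with polyhedral chains $X_\delta$ and $S_\delta$ approximating $X$ and $S$ in flat norm and mass. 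I would then choose a simplicial $(d+1)$-complex $K_\delta \subset \R^{d+1}$ that contains $P_\delta$, $X_\delta$, and $S_\delta$ as chains, and that is both fine (maximal simplex diameter $\Delta$ small) and regularized by the subdivision of \cref{thm:2dboundedsdt}.

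The heart of the argument is \cref{eq:fteqgoal}, $\lim_{\delta\downarrow 0}\F_{K_\delta}(P_\delta) = \F(T)$. By \cref{eq:icgoalrestate}, the discrepancy $D := P_\delta - X_\delta - \boundary S_\delta$ equals $(P_\delta - T) + (\boundary S - \boundary S_\delta) + (X - X_\delta)$; this is a chain on $K_\delta$ whose \emph{continuous} flat norm is $O(\delta)$, since each summand is small ($\F(P_\delta - T) < \delta$, while $\F(\boundary S - \boundary S_\delta) \le \F(S - S_\delta)$ and $\F(X - X_\delta)$ are small by the inequality $\F(\boundary C)\le\F(C)$ established in the proof of \cref{lem:convprops}). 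I would take flat-norm-near-optimal continuous decompositions of these small error currents into low-mass $d$- and $(d+1)$-pieces, push them onto $K_\delta$ by the single linear, boundary-commuting projection of \cref{thm:2dboundedsdt}, and recombine. Because that theorem's expansion constant $L$ is independent of $K_\delta$ and the boundary-mass contributions in its bounds carry a factor of $\Delta$, choosing $\Delta$ small keeps the pushed pieces of total mass $O(\delta)$; since $D$ already lies on the $d$-skeleton it is fixed by the projection, so these pushed pieces assemble into a genuine simplicial decomposition of $D$. Adding $X_\delta$ and $S_\delta$ gives a simplicial decomposition of $P_\delta$ with $\F_{K_\delta}(P_\delta) \le \mass(X_\delta) + \mass(S_\delta) + O(\delta) = \F(T) + O(\delta)$. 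The matching lower bound $\F_{K_\delta}(P_\delta) \ge \F(P_\delta) \ge \F(T) - \F(T - P_\delta) > \F(T) - \delta$ then yields \cref{eq:fteqgoal}.

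To finish, I would apply \cref{thm:simpint}: as $K_\delta \subset \R^{d+1}$ and $P_\delta$ is an integral $d$-chain, $\F_{K_\delta}(P_\delta)$ is attained by an integral simplicial decomposition $P_\delta = X_\delta^I + \boundary S_\delta^I$. These integral currents satisfy $\mass(X_\delta^I) + \mass(S_\delta^I) = \F_{K_\delta}(P_\delta) \le \F(T) + O(\delta)$ and, because $\boundary X_\delta^I = \boundary P_\delta$ and $\boundary S_\delta^I = P_\delta - X_\delta^I$, have uniformly bounded boundary masses as well; arranging the $K_\delta$ inside a fixed compact set, the compactness theorem for integral currents gives a subsequence with $X_\delta^I \to X_I$ and $S_\delta^I \to S_I$ in flat norm, with $X_I, S_I$ integral. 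Then \cref{lem:convprops} gives $X_I + \boundary S_I = \lim(X_\delta^I + \boundary S_\delta^I) = \lim P_\delta = T$, while lower semicontinuity of mass together with \cref{eq:fteqgoal} gives $\mass(X_I) + \mass(S_I) \le \liminf\F_{K_\delta}(P_\delta) = \F(T)$; as no decomposition beats $\F(T)$, equality holds and $T = X_I + \boundary S_I$ is the desired integral optimal decomposition.

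I expect the main obstacle to be exactly \cref{eq:fteqgoal}: controlling the pushed error terms. A direct use of \cref{thm:sdt} would scale the small flat norms of the error currents by complex-dependent regularity factors that can blow up as the approximating complexes are forced to become more irregular when $\delta \downarrow 0$, so that $\F_{K_\delta}(P_\delta)$ might fail to converge to $\F(T)$ (cf.\ \cref{fig:flatnormconv}). Resolving this is precisely the role of \cref{thm:2dboundedsdt}, whose complex-independent constant (obtained by confining irregular simplices to a region of negligible measure via \cref{con:boundreg}) together with the $\Delta$-weighting of boundary-mass terms makes the pushed error genuinely $O(\delta)$. The remaining work is careful bookkeeping: bounding the masses of $P_\delta - T$, $\boundary S - \boundary S_\delta$, and $X - X_\delta$ (all bounded, using the mass bounds of \cref{thm:polyapprox}), verifying that a single projection pushes all the pieces consistently, and checking the uniform support and boundary-mass hypotheses required by the compactness theorem.
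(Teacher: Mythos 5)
Your proposal is correct and takes essentially the same route as the paper's proof: polyhedral approximation of an optimal decomposition $T = X + \boundary S$, flat-norm decompositions of the small error currents pushed onto $K_\delta$ via the single linear, boundary-commuting projection of \cref{thm:2dboundedsdt} (with the $\Delta$-weighted boundary terms controlled by making the complex fine) to establish \cref{eq:fteqgoal}, then \cref{thm:simpint}, uniform mass bounds, the compactness theorem, and lower semicontinuity of mass to extract the integral optimal decomposition. Your minor deviations---using near-optimal rather than exactly optimal decompositions of the error terms, and decomposing $\boundary S - \boundary S_\delta$ directly instead of decomposing $S - S_\delta$ as the paper does---are cosmetic and do not change the argument.
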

\begin{proof}
We let $X + \boundary S$ be an optimal flat norm decomposition of $T$.
That is, $X$ is a $d$-current and $S$ is a $(d+1)$-current such that
\begin{align}
\label{eq:Tdecomp}
T &= X + \boundary S & \mbox{ and \hspace*{1in}}  \F(T) &= \mass(X) + \mass(S).
\end{align}
We note by \cref{lem:normaldecomp} that $X$ and $S$ are normal currents.

As a general outline of the proof, for each $\delta > 0$, we will choose a particular simplicial complex $K_\delta$ on which we have $d$-chains $P_\delta$ and $X_\delta$  as well as $(d+1)$-chain $S_\delta$ respectively approximating $T$, $X$, and $S$ with error at most $\delta$.
We convert the (possibly nonintegral) optimal flat norm decomposition of $T$ into a candidate simplicial decomposition of $P_\delta$ in order to show (\cref{claim:FTeq}) the simplicial flat norm of $P_\delta$ converges to the flat norm of $T$ (this step does not yet show that the flat norm decompositions converge).
We can take the optimal simplicial decomposition to be integral for each $P_\delta$ by \cref{thm:simpint}.
The compactness theorem from geometric measure theory along with the above convergence result allows us to take the limit of (a subsequence of) these integral simplicial decompositions and obtain an integral flat norm decomposition of $T$ (\cref{claim:Tintdecomp}).


Suppose $\delta > 0$ and apply \cref{thm:polyapprox} to obtain polyhedral currents $P_\delta$, $X_\delta$, and $S_\delta$ with 
\begin{subequations}
\label{eq:polyhedralapproximations}
\begin{align}
\label{eq:Tapprox}
\F(T-P_\delta) &< \delta, & \mass(P_\delta) &< \mass(T) + \delta, & \mass(\boundary P_\delta) & < \mass(\boundary T) + \delta,\\
\label{eq:Xapprox}
\F(X-X_\delta) &< \delta, & \mass(X_\delta) &< \mass(X) + \delta, & \mass(\boundary X_\delta) & < \mass(\boundary X) + \delta,\\
\label{eq:Sapprox}
\F(S-S_\delta) &< \delta, & \mass(S_\delta) &< \mass(S) + \delta, & \mass(\boundary S_\delta) & < \mass(\boundary S) + \delta.
\end{align}
\end{subequations}
We also require optimal flat norm decompositions of $P_\delta - T$, $X-X_\delta$, and $S-S_\delta$, so let $U_i^\delta$, $W_j^\delta$ and $V_2^\delta$ be $d$-, $(d+1)$-, and $(d+2)$-dimensional currents such that:
\begin{subequations}
\label{eq:decompositions}
\begin{align}
\label{eq:PTdecomp}
P_\delta - T &= U_0^\delta + \boundary W_0^\delta, & \F(P_\delta - T) &= \mass(U_0^\delta) + \mass(W_0^\delta),\\
\label{eq:Xdecomp}
X - X_\delta &= U_1^\delta + \boundary W_1^\delta, & \F(X - X_\delta) &= \mass(U_1^\delta) + \mass(W_1^\delta),\\
\label{eq:Sdecomp}
S - S_\delta &= W_2^\delta + \boundary V_2^\delta, & \F(S - S_\delta) &= \mass(W_2^\delta) + \mass(V_2^\delta).
\end{align}
\end{subequations}
To clarify the notation, we adopt the convention that variables with a $\delta$ subscript are chains on the simplicial complex $K_\delta$ whereas a $\delta$ superscript merely indicates dependence on $\delta$.

Let $K_\delta$ be any simplicial complex that triangulates $P_\delta$, $X_\delta$, and $S_\delta$ separately as well as the convex hull of their union.
We may assume (applying the subdivision algorithm of Edelsbrunner and Grayson\cite{EdGr2000} and \cref{thm:2dboundedsdt} if necessary) that the currents $U_0$, $U_1$, $W_0$, $W_1$, and $W_2$ can be pushed to $K_\delta$ with expansion bound at most $L$, and that the maximum diameter $\Delta$ of a simplex of $K_\delta$ satisfies
\begin{align}
\label{eq:diameter}
\Delta &\leq \frac{\delta}{\max\{1, \mass(\boundary U_0^\delta), \mass(\boundary U_1^\delta), \mass(\boundary W_0^\delta), \mass(\boundary W_1^\delta), \mass(\boundary W_2^\delta)\}}.
\end{align}

\begin{subclaim}
\label{claim:FTleq}
$\F(T) \leq \lim_{\delta \downarrow 0} \F_{K_\delta}(P_\delta)$.
\end{subclaim}
\begin{subproof}
%
%
By the triangle inequality, and since any simplicial flat norm decomposition is a candidate decomposition for the flat norm, we have
\begin{align*}
\F(T) &\leq \F(T-P_\delta) + \F(P_\delta)\\
 &\leq \F(T-P_\delta)+\F_{K_\delta}(P_\delta).
\end{align*}
The claim follows from letting $\delta \downarrow 0$ and noting that $\F(T-P_\delta) \to 0$.
\end{subproof}

\begin{subclaim} $\F(T) = \lim_{\delta \downarrow 0} \F_{K_\delta}(P_\delta)$
\label{claim:FTeq}
\end{subclaim}
\begin{subproof}
In light of \cref{claim:FTleq}, we must show that $\F(T) \geq \lim_{\delta \downarrow 0} \F_{K_\delta}(P_\delta)$.


\medskip
Recall that $X+\boundary S=T$ is an optimal flat norm decomposition of $T$, and that $X_\delta$ and $S_\delta$ are polyhedral approximations to $X$ and $S$ on our simplicial complex $K_\delta$.
Using the decompositions in \cref{eq:Tdecomp,eq:decompositions}, we can write
\begin{align}
\label{eq:Pdecomp}
\begin{split}
P_\delta &= T+U_0^\delta + \boundary W_0^\delta\\
&= X+\boundary S+U_0^\delta + \boundary W_0^\delta\\
&= X_\delta + U_0^\delta + U_1^\delta + \boundary(S_\delta + W_0^\delta + W_1^\delta + W_2^\delta).
\end{split}
\end{align}

\medskip
\noindent Now apply \cref{thm:2dboundedsdt} with $\epsilon = 1$ to the currents $U_i^\delta$ and $W_j^\delta$ for all $i \in \{0, 1\}$ and $j \in \{0, 1, 2\}$ to obtain $U_{i,\delta}$ and $W_{j,\delta}$ on the simplicial complex $K_\delta$ with
\begin{subequations}
\label{eq:pushdecomp}
\begin{align}
\mass(U_{i,\delta}) &\leq (11L)^{p-d+1}\mass(U_i^\delta) + (11L)^{p-d}\Delta \mass(\boundary U_i^\delta), \\
\mass(W_{j,\delta}) &\leq (11L)^{p-d}\mass(W_j^\delta) + (11L)^{p-d-1}\Delta \mass(\boundary W_j^\delta)).
\end{align}
\end{subequations}

\medskip
\noindent Applying \cref{eq:decompositions,eq:polyhedralapproximations,eq:diameter}, we obtain the following bounds from \cref{eq:pushdecomp}:
\begin{subequations}
\label{eq:pushbounds}
\begin{align}
\begin{split}
\mass(U_{i,\delta}) & \leq (11L)^{p-d+1}\delta + (11L)^{p-d}\frac{\delta}{\mass(\boundary U_i^\delta)}\mass(\boundary U_i^\delta)\\
& = (11L)^{p-d}(1 + 11L)\delta,
\end{split}\\
\begin{split}
\mass(W_{j,\delta}) & \leq (11L)^{p-d}\delta + (11L)^{p-d-1}\frac{\delta}{\mass(\boundary W_j^\delta)}\mass(\boundary W_j^\delta)\\
&= (11L)^{p-d-1}(1+11L)\delta.
\end{split}
\end{align}
\end{subequations}

\medskip
We apply the linearity result of \cref{thm:2dboundedsdt} to \cref{eq:Pdecomp} along with the fact that $P_\delta$, $X_\delta$, and $\boundary S_\delta$ are fixed by projection to the $d$-skeleton of $K_\delta$ to yield
\begin{align*}
P_\delta &=  (X_\delta + U_{0,\delta} + U_{1,\delta}) + \boundary(S_\delta + W_{0,\delta} + W_{1,\delta} + W_{2,\delta})
\end{align*}
which, as all quantities are chains on $K_\delta$, is a candidate simplicial flat norm decomposition of $P_\delta$.
Using this observation, the triangle inequality, and \cref{eq:pushbounds,eq:polyhedralapproximations}, we have
\begin{align*}
\F_{K_\delta}(P_\delta) &\leq \mass(X_\delta + U_{0,\delta} + U_{1,\delta}) + \mass(S_\delta + W_{0,\delta} + W_{1,\delta} + W_{2,\delta})\\
&\leq \mass(X_\delta) + \mass(U_{0,\delta}) + \mass(U_{1,\delta}) + \mass(S_\delta) + \mass(W_{0,\delta}) + \mass(W_{1,\delta}) + \mass(W_{2,\delta})\\
&\leq \mass(X) + \mass(S) + 2\delta + 2(11L)^{p-d}(1 + 11L)\delta + 3(11L)^{p-d-1}(1+11L)\delta\\
&= \F(T) + 2\delta + 2(11L)^{p-d}(1 + 11L)\delta + 3(11L)^{p-d-1}(1+11L)\delta.
\end{align*}

\medskip
\noindent The claim follows from taking the limit as $\delta \downarrow 0$.

\end{subproof}

\medskip
\begin{subclaim}
\label{claim:simpdecomp}
For each $\delta > 0$, there exist integral simplicial chains $Y_\delta$ and $R_\delta$ on $K_\delta$ such that $P_\delta = Y_\delta + \boundary R_\delta$ is an optimal simplicial flat norm decomposition (i.e., $\F_{K_\delta}(P_\delta) = \mass(Y_\delta) + \mass(R_\delta)$).
\end{subclaim}

\begin{subproof}
This result follows from \cref{thm:simpint}.
\end{subproof}

\medskip
\begin{subclaim}
\label{claim:simpdecompbounded}
There exists $c > 0$ such that for all $\delta \leq 1$, the currents $Y_\delta$, $\boundary Y_\delta$, $R_\delta$, and $\boundary R_\delta$ all have mass at most $c$.
\end{subclaim}
\begin{subproof}
Using the fact that $P_\delta = Y_\delta + \boundary R_\delta$ is an optimal simplicial flat norm decomposition and facts from \cref{eq:polyhedralapproximations}, we observe that
\begin{align*}
\begin{aligned}[c]
\mass(Y_\delta) &\leq \mass(P_\delta) \\
&< \mass(T) + \delta \\
&\leq \mass(T) + 1,\\
\\
\mass(R_\delta) &\leq \mass(P_\delta)\\
&< \mass(T) + \delta \\
&\leq \mass(T) + 1,\\
\end{aligned}
\qquad\qquad
\begin{aligned}
\mass(\boundary Y_\delta) &= \mass(\boundary (P_\delta - \boundary R_\delta))\\
 &= \mass(\boundary P_\delta)\\
 &< \mass(\boundary T) + \delta\\
 &\leq \mass(\boundary T) + 1,\\
\mass(\boundary R_\delta) &= \mass(P_\delta - Y_\delta)\\
&\leq \mass(P_\delta) + \mass(Y_\delta)\\
&< 2\mass(T) + 2.
\end{aligned}
\end{align*}
So $c = \max\{2\mass(T)+ 2, \mass(\boundary T) + 1\}$ works.

\end{subproof}

\begin{subclaim}
\label{claim:Tintdecomp}
There is an optimal flat norm decomposition of $T$ with integral currents.
\end{subclaim}
\begin{subproof}
The compactness theorem\cite{Federer1969,Morgan2008} states that given any closed ball $B$ in $\R^n$ and nonnegative constant $c$, the set 
\[
\{I \text{ is an integral $p$-current in $\R^n$} \mid \mass(I) \leq c, \mass(\boundary I) \leq c, \spt I \subseteq B\}
\]
 is compact with respect to the flat norm.
In light of \cref{claim:simpdecompbounded}, this means there is a compact set of integral currents containing $Y_\delta$ for all $\delta \leq 1$ (and similarly for $R_\delta$).

Let $\delta_n = \frac{1}{n}$ and consider the sequences $\{Y_{\delta_n}\}$ and $\{R_{\delta_n}\}$.
By compactness, there exists a subsequence $\{\delta^*_n\}$ of $\{\delta_n\}$ and integral currents $Y^*$ and $R^*$ such that $Y_{\delta^*_n} \to Y^*$ and $R_{\delta^*_n} \to R^*$ in the flat norm.
By \cref{lem:convprops}, we have $Y_{\delta^*_n} + \boundary R_{\delta^*_n} \to Y^* + \boundary R^*$.
Applying \cref{claim:simpdecomp} and \cref{claim:FTeq}, we get
\begin{align}
\label{eq:subseqflatconv}
\mass(Y_{\delta^*_n}) + \mass(R_{\delta^*_n}) = \F_{K_{\delta^*_n}} (P_{\delta^*_n}) \to \F(T).
\end{align}

Since $Y_\delta + \boundary R_\delta = P_\delta \to T$, we also have $Y_{\delta^*_n} + \boundary R_{\delta^*_n} \to T$.
That is, $T = Y^* + \boundary R^*$.
As mass is lower semicontinuous with respect to convergence in the flat norm and by \cref{claim:FTeq}, we have that 
\begin{align}
\label{eq:limitattainsflatnorm}
\begin{split}
\mass(Y^*) + \mass(R^*) &\leq \lim_{n\to\infty} \mass(Y_{\delta^*_n}) + \mass(R_{\delta^*_n}) \\
&= \lim_{n\to\infty} \F_{K_{\delta^*_n}}(P_{\delta^*_n})\\
&= \F(T).
\end{split}
\end{align}
Thus $\mass(Y^*) + \mass(R^*) = \F(T)$ and $Y^* + \boundary R^*$ is an optimal flat norm decomposition of $T$.
\end{subproof}
\end{proof}

We restate our main result for the cases where \cref{con:boundreg} is known to hold (see \cref{thm:2dboundreg}), again emphasizing that progress on the conjecture extends this result.

\begin{corollary}
If $T$ is an integral $1$-current in $\R^{2}$, then some flat norm minimizer for $T$ is an integral current.
That is, there is an integral $1$-current $X_I$ and integral $2$-current $S_I$ such that $\F(T) = \mass(X_I) + \mass(S_I)$ and $T = X_I + \boundary S_I$.
\end{corollary}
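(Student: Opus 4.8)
The plan is to recognize this corollary as precisely the $d = 1$ instance of \cref{thm:icmainresult}: here $T$ is an integral $d$-current with $d = 1$ living in $\R^{d+1} = \R^2$, and the desired conclusion (an integral $1$-current $X_I$ and integral $2$-current $S_I$ with $T = X_I + \boundary S_I$ and $\F(T) = \mass(X_I) + \mass(S_I)$) is exactly the conclusion of that theorem. Consequently the only thing left to establish is the standing hypothesis of \cref{thm:icmainresult}, namely that \cref{thm:2dboundedsdt} holds for $d = 1$; everything else (the polyhedral approximations, the candidate simplicial decomposition, \cref{thm:simpint}, and the compactness argument) is then inherited verbatim from the proof of \cref{thm:icmainresult}.

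To discharge this hypothesis I would specialize \cref{thm:2dboundedsdt} to $d = 1$, $q = 2$, and $s = 2$ (the only value with $d < s \le q$), so that its premise reduces to requiring \cref{con:boundreg} for the ambient dimension $q = 2$ and every skeleton dimension $p$ with $d - 1 = 0 \le p \le s = 2$, i.e. $p \in \{0, 1, 2\}$. \cref{thm:2dboundreg} already supplies the instance $p = 1$ (its first bullet, which holds for all $q \ge 1$ and in particular $q = 2$) and the instance $p = q = 2$ (its second bullet, the genuinely substantive case handled by Shewchuk's Terminator algorithm). The remaining instance $p = 0$ is trivial: a $0$-complex is a finite set of points with empty $(p-1)$-skeleton, so \cref{con:boundreg} holds vacuously with $M_\epsilon = M_\epsilon' = K$ and a fixed regularity bound --- and in any case the deformation in the proof of \cref{thm:2dboundedsdt} never projects off the $0$-skeleton (the lowest projection in $\R^2$ pushes the $0$-dimensional boundaries $\boundary T_i$ from the $1$-skeleton to the $0$-skeleton, invoking only the $p = 1$ instance). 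Hence \cref{con:boundreg} holds for all required pairs $(q, p)$ and \cref{thm:2dboundedsdt} holds for $d = 1$.

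With the hypothesis verified, \cref{thm:icmainresult} applies directly and yields the integral minimizer $T = X_I + \boundary S_I$ with $\F(T) = \mass(X_I) + \mass(S_I)$, completing the proof. I do not expect any genuine obstacle here: the mathematical content lives entirely in \cref{thm:2dboundreg} (the triangulation guarantee from the Terminator algorithm) and in \cref{thm:icmainresult}, both already in hand. The only point demanding care is the index bookkeeping --- confirming that \cref{con:boundreg} is available at exactly the skeleton dimensions $p$ traversed when pushing $1$- and $2$-currents together with their boundaries down through a planar $2$-complex --- and this is what the reduction above makes explicit.
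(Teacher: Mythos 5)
Your proposal is correct and takes essentially the same route as the paper, which offers no separate proof of this corollary and obtains it exactly as you do, by combining \cref{thm:icmainresult} with the instances of \cref{con:boundreg} supplied by \cref{thm:2dboundreg}. Your explicit treatment of the $p=0$ instance (vacuous, and in any case never invoked since the deformation's final projection only requires the $p=1$ case) is careful bookkeeping that the paper glosses over, but it introduces no new mathematical content.
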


\section{Flat Norm Decomposition and the Least Area Problem}
\label{sec:leastarea}
Given a current $T$ with $\boundary T = 0$, the least area problem is to find a current $S_T$ with minimal mass such that $\boundary S_T = T$ (this corresponds to a flat norm decomposition for which $X = T - \boundary S$ is constrained to be empty).
Even if $T$ is integral, it is not necessary that the least area be attained by an integral $S_T$.
Indeed, Young\cite{Yo1963}, White\cite{Wh1984}, and Morgan\cite{Mo1984} provide examples where the minimizers for a given integral boundary must be nonintegral.
We show that these imply similar examples for the flat norm problem.
In particular, the flat norm decomposition of integral currents cannot be taken to be integral in general for codimension 3 or higher.

It is notationally convenient for us to add a scaling parameter to the flat norm as suggested by previous work\cite{MoVi2007}.
In particular, we let $\F_\lambda$ denote the flat norm with scale $\lambda$ and define it by
\[
\F_\lambda(T) = \min_S \left(M(T-\boundary S) + \lambda M(S)\right).
\]
Although we prove results using $\F_\lambda$, we note that equivalent statements hold for the unscaled flat norm.
This follows from the correspondence $\F_\lambda(T) = \lambda^{-m} \F(T_\lambda)$ for an $m$-current $T$ and $\lambda > 0$ where $T_\lambda$ is a $\lambda$-dilation of $T$\cite{MoVi2007}.

\begin{lemma}
Suppose $\boundary T = 0$ and let $S_T$ denote a minimal mass current with boundary $T$.
For sufficiently small $\lambda$, we have $\F_\lambda(T) = \lambda \mass(S_T) < \mass(T-\partial S) + \lambda \mass(S)$ for any $S$ such that $T-\partial S \neq 0$. That is, it is strictly better to span T than not to span T when searching for a minimal flat norm decomposition of T.
\end{lemma}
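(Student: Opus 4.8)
The plan is to reduce the statement to the isoperimetric inequality for currents. First I would dispatch the equality $\F_\lambda(T) = \lambda\mass(S_T)$ as a corollary of the strict inequality: taking $S = S_T$ gives $T - \boundary S_T = 0$ and objective value $\lambda\mass(S_T)$, so $\F_\lambda(T) \le \lambda\mass(S_T)$; any competitor with $T - \boundary S = 0$ has $\boundary S = T$ and hence $\mass(S) \ge \mass(S_T)$ by minimality of $S_T$, while the strict inequality rules out any competitor with $T - \boundary S \neq 0$ beating $\lambda\mass(S_T)$. Thus the whole lemma comes down to proving
\[
\mass(X) + \lambda\mass(S) > \lambda\mass(S_T), \qquad X := T - \boundary S \neq 0,
\]
for every $S$, once $\lambda$ is below a threshold $\lambda_0$ that depends only on $T$ and not on $S$.

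The central step is a lower bound on $\mass(S)$ valid for \emph{every} competitor. Since $\boundary T = 0$ we get $\boundary X = 0$, so $X$ is a cycle; moreover $X = \boundary(S_T - S)$ is a boundary, hence fillable with finite cost. Let $A$ be a minimal-mass filling of $X$, i.e.\ $\boundary A = X$, and write $\mathbf{A}(X) = \mass(A)$ for this least filling area (finite, since $S_T - S$ is one filling). Then $\boundary(S + A) = (T - X) + X = T$, so $S + A$ spans $T$ and minimality gives $\mass(S_T) \le \mass(S) + \mathbf{A}(X)$, that is $\mass(S) \ge \mass(S_T) - \mathbf{A}(X)$. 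Substituting,
\[
\mass(X) + \lambda\mass(S) \ge \lambda\mass(S_T) + \mass(X) - \lambda\,\mathbf{A}(X),
\]
so it is enough to guarantee $\mass(X) > \lambda\,\mathbf{A}(X)$. Here the isoperimetric inequality for $m$-currents \cite{Federer1969} enters: there is a constant $\gamma = \gamma(m,n)$ with $\mathbf{A}(X) \le \gamma\,\mass(X)^{(m+1)/m}$ for every $m$-cycle $X$. I would then split on the size of $X$ (noting $\mass(X) > 0$ since mass is a norm). If $\mass(X) < (\gamma\lambda)^{-m}$, the isoperimetric bound yields $\lambda\,\mathbf{A}(X) \le \lambda\gamma\,\mass(X)^{(m+1)/m} < \mass(X)$, exactly as needed. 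If instead $\mass(X) \ge (\gamma\lambda)^{-m}$, then discarding the nonnegative term $\lambda\mass(S)$ it suffices to have $\mass(X) > \lambda\mass(S_T)$, which holds once $(\gamma\lambda)^{-m} > \lambda\mass(S_T)$, i.e.\ for $\lambda < (\gamma^m\mass(S_T))^{-1/(m+1)}$. Taking $\lambda_0 = \min\{1,\ (\gamma^m\mass(S_T))^{-1/(m+1)}\}$ validates both regimes at once.

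The one genuinely delicate point is the uniformity of $\lambda_0$ across all competitors $S$: the threshold must not depend on $S$, since otherwise ``$X \neq 0$ cannot beat spanning'' could fail along a sequence of bad competitors. This is precisely why the two-regime split is unavoidable. The isoperimetric inequality controls only the small-$X$ regime, where fillings are cheap relative to $\mass(X)$; for large $X$ the ratio $\mass(X)/\mathbf{A}(X)$ can be arbitrarily small (think of large round spheres, whose filling volume grows one power faster than their mass), so there one must fall back on the crude estimate $\mass(X) > \lambda\mass(S_T)$, which is available exactly because $\mass(X)$ is then large. I would also confirm at the outset that $X$, $S$, and $S_T$ lie in a class (normal or integral, compactly supported) for which a minimal filling exists and the isoperimetric inequality applies; given that $T$ is a compactly supported cycle this is standard, and $\mathbf{A}(X) < \infty$ is already witnessed by the explicit filling $S_T - S$.
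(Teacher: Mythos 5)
Your proof is correct, and it runs on the same two ingredients as the paper's --- the isoperimetric inequality, and the trick of filling the residual $X = T - \boundary S$ and appending the filling to $S$ to produce a competitor against the minimality of $S_T$ --- but it organizes them in a genuinely different way. The paper fixes an \emph{optimal} $\F_\lambda$-decomposition $\tilde{S}$ (whose existence comes from the Hahn--Banach attainment noted earlier in the paper) and derives a contradiction from $\mass(T - \boundary \tilde{S}) > 0$: optimality supplies for free the bound $\mass(T-\boundary\tilde{S}) \leq \F_\lambda(T) \leq \lambda \mass(S_T) < \mass(S_T)$, which is precisely the ``small residual'' control that lets the isoperimetric step close in a single regime, with no case split. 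You instead prove the strict inequality directly and uniformly over \emph{all} competitors, and since an arbitrary competitor can have a large residual, you are forced into the two-regime split on $\mass(X)$; in exchange you never need the $\F_\lambda$ infimum to be attained, and you obtain the universally quantified strict inequality head-on, whereas the paper's write-up reaches it slightly indirectly (any competitor achieving the value with nonzero residual would itself be an optimal decomposition with nonzero residual, contradicting what was shown). Your threshold $\lambda_0 = \min\{1, (\gamma^m \mass(S_T))^{-1/(m+1)}\}$ differs from the paper's $\lambda < \min\{1, (\alpha \mass(S_T)^{1/m})^{-1}\}$ only because of the case analysis; both are uniform in $S$, as required. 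Two technical caveats you share with (and handle no worse than) the paper: the comparison $\mass(S_T) \leq \mass(S) + \mathbf{A}(X)$ only needs the \emph{infimal} filling mass, so existence of a genuinely minimal filling $A$ can be sidestepped by near-minimizers; and the isoperimetric inequality as cited is for integral boundaries, while the residual is a priori only a normal current --- an issue equally present in the paper's own application to $T - \boundary\tilde{S}$.
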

\begin{proof}
The isoperimetric inequality\cite[5.3]{Morgan2008} for integral $m$-dimensional boundaries in $\R^n$ gives us
\begin{align}
\label{eq:isoperimetricbound}
\mass(S_B) \leq \alpha \mass(B)^{\frac{m+1}{m}}
\end{align}
where $S_B$ is a minimal mass current with boundary $B$ and $\alpha$ is independent of $B$.
Now choose 
\begin{align}
\label{eq:lambdabound}
\lambda < \min\cbr{1, \frac{1}{\alpha \mass(S_T)^{\frac{1}{m}}}}.
\end{align}
Let $\tilde{S}$ be part of an optimal $\lambda$-flat norm decomposition of $T$.
That is, we have $T = (T-\boundary \tilde{S}) + \boundary \tilde{S}$ and 
\begin{align}
\label{eq:flatdecomp}
\F_\lambda(T) = \mass(T-\boundary \tilde{S}) + \lambda\mass(\tilde{S}).
\end{align}
Assume also that $\mass(T-\boundary \tilde{S}) > 0$. Using \cref{eq:lambdabound,eq:flatdecomp} along with the fact that $\boundary S_T = T$ is a decomposition of $T$, we obtain
\begin{align*}
\mass(T-\boundary \tilde{S}) &\leq \F_\lambda(T)\\
&\leq \lambda\mass(S_T)\\
&< \mass(S_T)
\end{align*}
which implies
\begin{align}
\label{eq:TboundarySexponentbound}
\mass(T-\boundary \tilde{S})^{\frac{m+1}{m}} &< \mass(T-\boundary \tilde{S}) \mass(S_T)^{\frac{1}{m}}.
\end{align}
Applying \cref{eq:isoperimetricbound,eq:lambdabound,eq:TboundarySexponentbound} we get
\begin{align}
\label{eq:minimalsurfacebound}
\begin{split}
\lambda \mass(S_{T-\boundary \tilde{S}}) &< \lambda \alpha \mass(T-\boundary \tilde{S}) \mass(S_T)^{\frac{1}{m}}\\
&< \mass(T-\boundary \tilde{S}).
\end{split}
\end{align}
Since $\boundary(S_{T-\boundary \tilde{S}} + \tilde{S}) = T-\boundary \tilde{S} + \boundary S = T$, its mass is no smaller than that of $S_T$, the minimal mass surface with this boundary.
Using this fact along with the triangle inequality and \cref{eq:minimalsurfacebound,eq:flatdecomp}, we have:
\begin{align*}
\begin{split}
\F_\lambda(T) &\leq \lambda\mass(S_T)\\
              &\leq {\lambda} \mass(S_{T-\boundary \tilde{S}}  + \tilde{S}) \\
              &\leq {\lambda} \mass(S_{T-\boundary \tilde{S}}) + {\lambda} \mass(\tilde{S}) \\ 
              &< \mass(T-\boundary \tilde{S}) + {\lambda} \mass(\tilde{S})\\
              &= \F_\lambda(T).
\end{split}
\end{align*}
This is a contradiction. Thus $\mass(T-\boundary \tilde{S}) =0$ and we conclude that for sufficiently small $\lambda$, $\F_\lambda(T) = \lambda \mass(S_T) < \mass(T-\partial S) + \lambda \mass(S)$ for all $S$ such that $T-\partial S \neq 0$.
\end{proof}

\begin{corollary}
\label{cor:leastareaexamplesareflatnormexamples}
The examples of Young\cite{Yo1963}, White\cite{Wh1984}, and Morgan\cite{Mo1984} (demonstrating integral boundaries for which the least area problem minimizer is nonintegral) directly imply the existence of integral currents where, for some $\lambda > 0$, every optimal $\F_\lambda$ decomposition is nonintegral (simply choose $\lambda$ small enough and/or scale up the example sufficiently).
\end{corollary}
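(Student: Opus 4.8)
The plan is to read this off the preceding lemma essentially for free, treating each least-area counterexample as a degenerate flat norm instance in which the optimal decomposition is forced to span. First I would fix an integral cycle $T$ (so $\boundary T = 0$) produced by the constructions of Young, White, or Morgan, chosen so that \emph{every} minimal-mass current $S_T$ with $\boundary S_T = T$ is nonintegral. Since these are $1$-dimensional curves in $\R^4$, this is a codimension-$3$ instance, matching the claim. The only hypothesis I need to check to invoke the preceding lemma is that $S_T$ exists with finite mass, which holds because a least-area minimizer (possibly nonintegral) is attained and has finite mass by construction.

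Next I would apply the preceding lemma. Choosing $\lambda > 0$ small enough that $\lambda < \min\{1, 1/(\alpha\,\mass(S_T)^{1/m})\}$, where $m$ is the dimension of $T$ and $\alpha$ is the isoperimetric constant, the lemma's strict inequality guarantees that any optimal $\F_\lambda$ decomposition of $T$ must span $T$: if $T = (T - \boundary S) + \boundary S$ is optimal, then $T - \boundary S = 0$, so $\boundary S = T$ and $\mass(S) = \mass(S_T)$. In other words, the $S$-part of \emph{any} optimal $\F_\lambda$ decomposition is itself a minimal-mass filling of $T$, and the $X$-part vanishes.

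From here the conclusion is immediate. Suppose toward a contradiction that some optimal $\F_\lambda$ decomposition $T = X_I + \boundary S_I$ were integral. By the spanning conclusion, $X_I = 0$ and $S_I$ is a minimal-mass current with $\boundary S_I = T$, i.e., a least-area solution; but $S_I$ is integral, contradicting the defining property of the chosen example. Hence every optimal $\F_\lambda$ decomposition of $T$ is nonintegral. The parenthetical remark about scaling I would handle either by taking $\lambda$ small directly as above, or equivalently by transferring to the unscaled flat norm through the dilation correspondence $\F_\lambda(T) = \lambda^{-m}\F(T_\lambda)$. I expect no genuine obstacle here: the entire force of the argument is carried by the strict inequality in the preceding lemma, which rules out any non-spanning optimum, and the only care required is the bookkeeping of confirming the lemma's hypotheses and translating ``nonintegral least-area minimizer'' into ``no integral optimal decomposition'' via the spanning step.
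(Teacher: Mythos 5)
Your proposal is correct and is exactly the argument the paper intends: the corollary carries no separate proof beyond its parenthetical, and the entire content is the preceding lemma's strict inequality, which for small $\lambda$ forces $T - \boundary S = 0$ in any optimal $\F_\lambda$ decomposition, so the $S$-part is a minimal-mass filling and an integral optimal decomposition would contradict the nonintegrality of every least-area minimizer in the Young/White/Morgan examples. Your added bookkeeping (checking that a finite-mass minimizer $S_T$ exists, and noting $\mass(S) = \mass(S_T)$ via $\F_\lambda(T) = \lambda\mass(S_T)$) fills in the same steps the paper leaves implicit.
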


\begin{corollary}
Given $d \geq 1$ and $n \geq d + 3$, there exist integral $d$-currents in $\R^n$ without any integral optimal flat norm decompositions.
\end{corollary}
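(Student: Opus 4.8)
The plan is to treat the codimension-$3$, $d=1$ instances supplied by \cref{cor:leastareaexamplesareflatnormexamples} as base cases and to manufacture every remaining pair $(d,n)$ with $n \geq d+3$ from them by two independent ``lifting'' operations: raising the ambient dimension $n$ while fixing the current dimension $d$, and raising $d$ while keeping the codimension equal to $3$. Throughout I would work with the scaled flat norm $\F_\lambda$ (for which the base statement is phrased) and convert to the unscaled $\F$ only at the very end, using the dilation identity $\F_\lambda(T) = \lambda^{-m}\F(T_\lambda)$ recorded earlier. I would also first extract from the base example a quantitative gap: letting $T_0$ be the base integral $1$-cycle in $\R^4$ and $\lambda_0$ small enough that the optimal $\F_{\lambda_0}$-decomposition spans $T_0$ (by the preceding lemma) and is nonintegral (by \cref{cor:leastareaexamplesareflatnormexamples}), a compactness argument over integral competitors of bounded mass shows that there is a constant $g > 0$ with $\mass(X') + \lambda_0\mass(S') \geq \F_{\lambda_0}(T_0) + g$ for every \emph{integral} decomposition $T_0 = X' + \boundary S'$.

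Raising $n$ is the easy direction. If $T$ is an integral $d$-current supported in $\R^m \subseteq \R^n$ that admits no integral optimal decomposition in $\R^m$, let $\pi \colon \R^n \to \R^m$ be the orthogonal projection, which is $1$-Lipschitz. Given any integral decomposition $T = X' + \boundary S'$ in $\R^n$, the pushforward is integral and satisfies $\pi_\# T = T$, $\pi_\#\boundary S' = \boundary \pi_\# S'$, and $\mass(\pi_\# X') + \lambda\mass(\pi_\# S') \leq \mass(X') + \lambda\mass(S')$. Since more competitors are available in the larger space, $\F_\lambda^{\R^n}(T) \leq \F_\lambda^{\R^m}(T)$, so an optimal integral decomposition in $\R^n$ would project to an integral decomposition in $\R^m$ of no greater cost, hence to an optimal \emph{integral} decomposition in $\R^m$ --- a contradiction. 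This lifts any base example to all larger ambient dimensions.

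Raising $d$ at codimension $3$ is the crux. Writing $\R^{d+3} = \R^4 \times \R^{d-1}$, I would take the integral $d$-current $T = T_0 \times Q$, where $Q = [0,L]^{d-1}$ is a large cube and $T_0,\lambda_0,g$ are as above. Slicing by the projection $p \colon \R^{d+3} \to \R^{d-1}$ onto the cube factor sends $T$ to a copy of $T_0$ over a.e.\ interior point $y$, and sends any decomposition $T = X + \boundary S$ to a decomposition $T_0 = X_y + \boundary S_y$ of the slice. If $X$ and $S$ were integral, their slices would be integral for a.e.\ $y$, so each sliced decomposition would cost at least $\F_{\lambda_0}(T_0) + g$; integrating via the coarea slicing inequality $\mass(A) \geq \int \mass(\langle A, p, y\rangle)\,dy$ gives $\mass(X) + \lambda_0\mass(S) \geq \vol(Q)\,(\F_{\lambda_0}(T_0) + g)$. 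On the other hand, the product competitor $S = S_0 \times Q$, $X = -\,S_0 \times \boundary Q$ (where $T_0 = \boundary S_0$ with $S_0$ the nonintegral spanning minimizer) realizes $\F_{\lambda_0}(T) \leq \vol(Q)\,\F_{\lambda_0}(T_0) + \mass(S_0)\,\mass(\boundary Q)$. Because $\vol(Q) = L^{d-1}$ grows one order faster in $L$ than the boundary defect $\mass(\boundary Q) = 2(d-1)L^{d-2}$, choosing $L$ large forces the defect below $\vol(Q)\,g$, so the true optimum lies strictly below the integral lower bound and no integral decomposition can be optimal.

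The main obstacle is exactly this last step, and its technical content is twofold. First, one must establish the strict integral gap $g > 0$ for the base example (so that \emph{near}-optimal integral fillings are excluded, not merely the single optimum), which needs the compactness theorem applied to integral competitors of bounded mass and boundary. Second, one must control the boundary defect introduced by the cube --- unavoidable, since a closed factor would raise the codimension above $3$ --- which is precisely what makes the large-$L$ scaling necessary. The slicing facts invoked (that slicing commutes with $\boundary$, that slices of integral currents are integral a.e., and the coarea mass inequality) are standard but must be applied with care for signs and for the a.e.\ qualifier. Combining the three lifts with the final dilation identity then yields, for every $d \geq 1$ and $n \geq d+3$, an integral $d$-current in $\R^n$ with no integral optimal flat norm decomposition.
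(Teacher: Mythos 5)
Your proposal is correct in substance, but it takes a genuinely different route from the paper. The paper's proof is essentially a citation: Morgan's Remark~1.2 already constructs, for \emph{every} $d \geq 1$, an integral $d$-current in $\R^{d+3}$ whose least-area minimizers must be nonintegral, so the preceding lemma and \cref{cor:leastareaexamplesareflatnormexamples} convert this directly into a flat norm counterexample in codimension $3$ for each $d$, and the ambient dimension is then raised by ``trivial embedding.'' You instead use only the $d=1$ seed in $\R^4$ and build all higher $d$ yourself: the quantitative integral gap $g>0$ obtained by compactness, the product current $T_0 \times [0,L]^{d-1}$, the slicing/coarea lower bound $\mass(X)+\lambda_0\mass(S) \geq L^{d-1}\bigl(\F_{\lambda_0}(T_0)+g\bigr)$ for integral competitors, and the product competitor whose boundary defect scales like $L^{d-2}$ and is therefore beaten by choosing $L$ large. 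This costs you real machinery (slicing of integral currents, the coarea mass inequality, products, compactness) that the paper avoids by leaning on Morgan, but it buys self-containedness: your argument needs nothing beyond the one-dimensional counterexample, and your $1$-Lipschitz projection argument for raising $n$ is precisely the content behind the paper's unproved phrase ``trivially embedded,'' so your writeup actually substantiates a step the paper leaves implicit. Two details remain to be nailed down, both routine: in the compactness argument for $g$ you must arrange uniformly compact supports before invoking the compactness theorem (push each competitor onto a fixed ball containing $\spt T_0$ via the $1$-Lipschitz nearest-point projection, which fixes $T_0$, preserves integrality, and does not increase mass, then conclude by lower semicontinuity of mass and the closure theorem); and the slicing identities you invoke hold only up to sign and for almost every $y$, which, as you note, does not affect the cost comparison since $(X_y, \pm S_y)$ has the same cost. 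With those points supplied, your argument stands as a valid alternative proof of the corollary.
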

\begin{proof}
For concreteness, we concentrate on Morgan's least area example which can construct an integral $d$-current in $\R^{d+3}$ without an integral optimal decomposition\cite[Remark 1.2]{Mo1984}.
This can then be trivially embedded in $\R^n$.
\end{proof}

\section{Discussion}
Although we specifically show integral optimal decompositions exist only for integral $1$-currents in $\R^2$, our framework provides a clear way to extend these results to $d$-currents in $\R^{d+1}$: find a higher-dimensional subdivision algorithm that bounds our simplicial regularity constants overall and isolates the problematic portions appropriately (\cref{con:boundreg}).
Similarly, if the class of simplicial complexes we generate could be forced to be free of relative torsion (and hence their boundary matrices are totally unimodular\cite{DeHiKr2011}) in higher codimensions, we could extend our results to, for example, $1$-currents in $\R^3$.
However, in light of counterexamples in codimension $3$ to the related least spanning area problem\cite{Yo1963,Wh1984,Mo1984}, the results cannot be extended to $1$-currents in $\R^d$ for $d \geq 4$.

In two dimensions, we used the minimum angle of the triangular complex as a surrogate to bound our simplicial regularity constant.
Unfortunately, dihedral angles cannot serve the same role for tetrahedra on account of ``spires'', which are irregular simplices with nice dihedral angles\cite[Section 1.7]{ChDeSh2012}.
One promising idea is to use densities, another tool from geometric measure theory.
Essentially, the density of a simplex at a vertex is the ratio of a (sufficiently small) vertex-centered sphere's volume inside the simplex to the sphere's total volume.

Shifting out of the simplicial setting, we see no obvious impediments to reworking this result in terms of cellular complexes and using a version of Sullivan's cellular deformation theorem\cite{Sullivan1990} modified in the same way as our multiple current simplicial deformation theorem.
For this approach to work, we need to also verify the simplicial integrality results\cite{DeHiKr2011,IbKrVi2013} in the cellular setting.

This approach has the possible advantage that the conjectured subdivision algorithm need not generate a simplicial complex; for example, in two dimensions we would be finished after superimposing the square grid in \cref{thm:2dboundreg}.
This weaker restriction may be helpful when working in higher dimensions.

For another approach, we note that the subdivision algorithm is only used to prove \cref{claim:FTeq} (that $\lim_{\delta \downarrow 0} \F_{K_\delta}(P_\delta) = \F(T)$), which is then used in \cref{claim:Tintdecomp} to show that the limit of the simplicial decompositions is an optimal decomposition for $T$.
\cref{claim:FTeq} is sufficient to reach this conclusion, but not necessary: \cref{fig:flatnormconv} provides an example where \cref{claim:FTeq} does not hold but the limit of the simplicial decompositions is the optimal decomposition for $T$.
Could we show the limit is an optimal decomposition of $T$ for a more general class of simplicial decompositions?

Our approach using results on triangulations and simplicial complexes is novel compared to the traditional methods from geometric measure theory typically used to tackle problems similar to the one we have considered.
Are there other questions in geometric analysis that could be answered (more efficiently) using approaches based on computational geometry?
One potential candidate could be the problem of counting the number of minimal surfaces of a specified topological type bounded by a given curve\cite{HoWh2008}.

\bibliography{flatnorm,homology}

\begin{thebibliography}{10}

\bibitem{ChEs2005}
Tony~F. Chan and Selim Esedo\={g}lu.
\newblock Aspects of total variation regularized {$L^1$} function
  approximation.
\newblock {\em SIAM Journal on Applied Mathematics}, 65(5):1817--1837, 2005.

\bibitem{ChDeSh2012}
Siu-Wing Cheng, Tamal~K. Dey, and Jonathan~R. Shewchuk.
\newblock {\em Delaunay Mesh Generation}.
\newblock CRC Press, 2012.

\bibitem{DeHiKr2011}
Tamal~K. Dey, Anil~N. Hirani, and Bala Krishnamoorthy.
\newblock Optimal homologous cycles, total unimodularity, and linear
  programming.
\newblock {\em SIAM Journal on Computing}, 40(4):1026--1040, 2011.
\newblock \href{http://arxiv.org/abs/1001.0338}{arXiv:1001.0338}.

\bibitem{EdGr2000}
Herbert Edelsbrunner and Daniel Grayson.
\newblock Edgewise subdivision of a simplex.
\newblock {\em Discrete Computational Geometry}, 24:707--719, 2000.

\bibitem{Federer1969}
Herbert Federer.
\newblock {\em Geometric Measure Theory}.
\newblock Die Grundlehren der mathematischen Wissenschaften, Band 153.
  Springer-Verlag, New York, 1969.

\bibitem{HoWh2008}
David Hoffman and Brian White.
\newblock On the number of minimal surfaces with a given boundary.
\newblock {\em {Ast\'erisque}}, 322:207--224, 2008.
\newblock \href{http://arxiv.org/abs/0807.0933}{arXiv:0807.0933}.

\bibitem{IbKrVi2013}
Sharif Ibrahim, Bala Krishnamoorthy, and Kevin~R. Vixie.
\newblock Simplicial flat norm with scale.
\newblock {\em Journal of Computational Geometry}, 4(1):133--159, 2013.
\newblock \href{http://arxiv.org/abs/1105.5104}{arXiv:1105.5104}.

\bibitem{Mo1984}
Frank Morgan.
\newblock Area-minimizing currents bounded by higher multiples of curves.
\newblock {\em Rendiconti del Circolo Matematico di Palermo}, 33(1):37--46,
  1984.

\bibitem{Morgan2008}
Frank Morgan.
\newblock {\em Geometric Measure Theory: A Beginner's Guide}.
\newblock Academic Press, fourth edition, 2008.

\bibitem{MoVi2007}
Simon~P. Morgan and Kevin~R. Vixie.
\newblock {$L^1$TV} computes the flat norm for boundaries.
\newblock {\em Abstract and Applied Analysis}, 2007:Article ID 45153,14 pages,
  2007.
\newblock \href{http://arxiv.org/abs/math/0612287}{arXiv:0612287}.

\bibitem{Sh2002}
Jonathan~R. Shewchuk.
\newblock {D}elaunay refinement algorithms for triangular mesh generation.
\newblock {\em Computational Geometry}, 22(1):21--74, 2002.

\bibitem{Sullivan1990}
John~M. Sullivan.
\newblock {\em A Crystalline Approximation Theorem for Hypersurfaces}.
\newblock PhD thesis, Princeton University, 1990.

\bibitem{Wh1984}
Brian White.
\newblock The least area bounded by multiples of a curve.
\newblock {\em Proceedings of the American Mathematical Society},
  90(2):230--232, February 1984.

\bibitem{Wh1998}
Brian White.
\newblock {The mathematics of F.~J.~Almgren,~Jr.}
\newblock {\em The Journal of Geometric Analysis}, 8(5):681--702, 1998.

\bibitem{Wh1999}
Brian White.
\newblock Rectifiability of flat chains.
\newblock {\em Annals of Mathematics}, 150(1):165--184, 1999.

\bibitem{Yo1963}
Laurence~C. Young.
\newblock Some extremal questions for simplicial complexes {V. T}he relative
  area of a {K}lein bottle.
\newblock {\em Rendiconti del Circolo Matematico di Palermo}, 12(3):257--274,
  1963.

\bibitem{Yo2013}
Robert Young.
\newblock Filling multiples of embedded cycles and quantitative
  nonorientability.
\newblock 2013.
\newblock \href{http://arxiv.org/abs/1312.0966}{arXiv:1312.0966}.

\end{thebibliography}
\bibliographystyle{plain}

\end{document}